\newcommand{\bc}{\begin{center}}
\newcommand{\ec}{\end{center}}
\newcommand{\bt}{\begin{tabular}}
\newcommand{\et}{\end{tabular}} 
\newcommand{\bea}{\begin{eqnarray}}
\newcommand{\eea}{\end{eqnarray}}
\newcommand{\bean}{\begin{eqnarray*}}
\newcommand{\eean}{\end{eqnarray*}}
\newcommand{\ba}{\begin{array}}
\newcommand{\ea}{\end{array}}
\def\be{\begin{eqnarray}}
\def\ee{\end{eqnarray}}
\def\ben{\begin{eqnarray*}}
\def\een{\end{eqnarray*}}
\newcommand{\RL}{{\mathbb R}}
\newcommand{\NN}{{\mathbb N}}
\def\elabel#1{\label{e:#1}}
\def\sq{$\Box$}
\def\qed{\ifmmode\sq\else{\unskip\nobreak\hfil
\penalty50\hskip1em\null\nobreak\hfil\sq
\parfillskip=0pt\finalhyphendemerits=0\endgraf}\fi\par\medbreak}
\newsavebox{\junk}
\savebox{\junk}[1.6mm]{\hbox{$|\!|\!|$}}
\def\det{{\mathop{\rm det}}}
\def\ind{\field{I}}
\def\til={{\widetilde =}}
 \def\eq#1/{(\ref{#1})}
\def\eq#1/{(\ref{e:#1})}
\newcommand{\beqn}[1]{\notes{#1}%
\begin{eqnarray} \elabel{#1}}
\newcommand{\eeqn}{\end{eqnarray} }
\newcommand{\beq}[1]{\notes{#1}%
\begin{equation}\elabel{#1}}
\newcommand{\eeq}{\end{equation}} 
\def\bdes{\begin{description}}
\def\edes{\end{description}}
\def\notes#1{}
\def\E{\mathbb{E}}
\def\C{{\bf C}}
\def\Var{{\rm Var}}
\def\phi{\varphi}
\def\bee{\begin{eqnarray*}}
\def\ene{\end{eqnarray*}}
\newcommand{\R}{\mathbb{R}}
\newcommand{\N}{\mathbb{N}}
\newcommand{\Z}{\mathbb{Z}}
\def\Var{\mbox{Var}}
\def\ind{{\mathbf 1}}
\def\to{\rightarrow}
\def\mb{\mbox}
\def\l{\left}
\def\r{\right}
\def\<{\langle}
\def\>{\rangle}
\newcommand\mnote[1]{} 
\newcommand\bes{\begin{eqnarray*}}
\newcommand\ees{\end{eqnarray*}}
\newcommand\besn{\begin{eqnarray}}
\newcommand\eesn{\end{eqnarray}}
\def\bthm{\begin{theorem}}
\def\ethm{\end{theorem}}
\def\bdefn{\begin{definition}}
\def\edefn{\end{definition}}
\newcommand{\benu}{\begin{enumerate}\setlength\itemsep{6pt}}
\newcommand{\beit}{\begin{itemize}\setlength\itemsep{3pt}}
\def\eenu{\end{enumerate}}
\def\eeit{\end{itemize}}
\def\beds{\begin{description}}
\def\eeds{\end{description}}
\def\bepr{\begin{problem}}
\def\eepr{\end{problem}}
\def\bprf{\begin{proof}}
\def\eprf{\end{proof}}
\def\berk{\begin{remark}}
\def\eerk{\end{remark}}
\def\bex{\begin{exercise}}
\def\eex{\end{exercise}}
\def\beg{\begin{example}}
\def\eeg{\end{example}}
\def\suchthat{{\; : \;}}
\renewcommand{\qed}{\hfill\text{$\blacksquare$}}
\def\HH{{\mathcal H}}
\def\PP{{\mathcal P}}
\def\C{\mathbb{C}}
\def\N{\mathbb{N}}
\def\R{\mathbb{R}}
\def\Z{\mathbb{Z}}
\def\PP{\mathbb{P}}
\newcommand{\sm}{{\raise0.3ex\hbox{$\scriptstyle \setminus$}}}
\renewcommand\phi{\varphi}
\theoremstyle{plain} 
    \newtheorem{theorem}{Theorem}
    \newtheorem{lemma}{Lemma}
    \newtheorem{corollary}{Corollary}
    \newtheorem{conjecture}{Conjecture}
\theoremstyle{definition} 
    \newtheorem{definition}{Definition}
    \newtheorem{result}[theorem]{Result}
    \newtheorem{exercise}[theorem]{Exercise}
    \newtheorem{problem}[theorem]{Problem}
        \newtheorem{remark}{Remark}
    \newtheorem{example}[theorem]{Example}
\title{Log-concavity in one-dimensional Coulomb gases and related ensembles}
\author{Jnaneshwar Baslingker, Manjunath Krishnapur, Mokshay Madiman }
\date{\today}
\address{Department of Mathematics\\
        University of Toronto\\
        Toronto, ON\\
        Canada}
\email{j.baslingker@utoronto.ca}
\address{Department of Mathematics\\
        Indian Institute of Science\\
        Bangalore 560012, India}
\email{manju@iisc.ac.in}
\address{University of Delaware\\
Department of Mathematical Sciences\\ 501 Ewing Hall
Newark, DE 19716, USA}
\email{madiman@udel.edu}
\thanks{M.K. is partly supported by the DST FIST program - 2021 [TPN - 700661]. We acknowledge the support of the International Centre for Theoretical Sciences (ICTS) as this work was initiated when the authors participated in the program  {\em Topics in High Dimensional Probability} (code: ICTS/thdp2023/1).}
\begin{document}
\begin{abstract}
We prove log-concavity of the lengths of the top rows of Young diagrams under Poissonized Plancherel measure. This is the first known positive result towards a 2008 conjecture  of Chen~\cite{Che08} that the length of the top row of a Young diagram under the Plancherel measure is log-concave. This is done by showing that the ordered elements of several discrete ensembles
have log-concave distributions. In particular, we show the log-concavity of passage times in last passage percolation with geometric weights,  using their connection to Meixner ensembles.

In the continuous setting, distributions of the maximal elements of  beta ensembles with convex potentials on the real line are shown to be log-concave. As a result, log-concavity of the Tracy-Widom distributions for all parameters $\beta>0$ follows, confirming a folklore conjecture that was partially proved by Deift for $\beta=2$. Furthermore, we also obtain log-concavity and positive association for the joint distribution of the $k$ smallest eigenvalues of the stochastic Airy operator. Our methods also show the log-concavity of the Airy-2 process and the Airy distribution. A log-concave distribution with full-dimensional support must have density, a fact that was apparently not known for some of these examples.
\end{abstract}
\date{}
\maketitle

\section{Introduction}
A Radon measure $\mu$ on $\R^n$ is said to be log-concave if 
\[
\mu(s A+(1-s)B)\ge \mu(A)^{s}\mu(B)^{1-s}
\]
for all Borel sets $A,B$ and for all $0\leq s\leq 1$. Here $A+B=\{a+b\; : \; a\in A, \ b\in B\}$ is the Minkowski sum. It is a well-known result of Borell (see Theorem~2.7 of \cite{SW14}) that if $\mu$ is not supported in any $n-1$ dimensional affine subspace, then $\mu$ is absolutely continuous with respect to Lebesgue measure and has a density function (i.e., Radon-Nikodym derivative) that is log-concave. Recall that a non-negative function $f$ defined on $\mathbb{R}^n$ is said to be log-concave if 
\begin{align*}
    f(s x+(1-s)y)\geq f(x)^{s}f(y)^{1-s},
\end{align*}
for each $x,y\in\mathbb{R}^n$ and $0\leq s \leq 1$. In the discrete setting, a sequence $\{a_k\}_{k\in \mathbb {Z}}$ of non-negative numbers is said to log-concave if $a_{k}^2\geq a_{k-1}a_{k+1}$ for all $k$ and there are no internal zeros. There is no universally accepted notion of log-concavity  on $\Z^n$. We propose the following definition, for reasons that we discuss in Section~\ref{sec: disc mult lc}.
\begin{definition}\label{def: disc lc}
    A non-negative function $f$ on $\Z^n$ is said to be discrete log-concave if 
    \[
    f(x)f(y)\le f\l(\l\lfloor \frac{x+y}{2}\r\rfloor\r)f\l(\l\lceil \frac{x+y}{2}\r\rceil\r)
    \]
    for all $x,y\in \Z^n$, where the sum, floor and ceiling functions are applied co-ordinatewise.
\end{definition}
This agrees with the definition of log-concavity for $n=1$ (note that internal zeros are ruled out by this definition). Most importantly, with this definition, log-concavity is preserved under (co-ordinate) marginalization. This is explained in Section~\ref{sec: disc mult lc}.

 A random variable or its probability distribution is said to be log-concave if it has a log-concave density function (on $\mathbb{R}^n$) or if it has a discrete log-concave mass function (on $\Z^n$).

Log-concave distributions and several properties related to it play an important role in several areas of mathematics and therefore have been extensively studied. Applications of log-concavity arise in combinatorics, algebra and computer science, as reviewed by Stanley \cite{Sta99:book} and Brenti \cite{Bre89:book}. In probability, it is related to the notion of negative association of random variables \cite{BBL09}, and is also useful in statistics (see, e.g.,  \cite{JP83, SW14}). 
Log-concave distributions also arise very organically in convex geometry and geometric functional analysis (see, e.g., \cite{BM12:jfa, MMX17:0}). Several functional inequalities that hold for Gaussian distributions also hold for appropriate subclasses of log-concave distributions on $\RL^n$ (see, e.g., \cite{Bob99, BBCG08, BM11:aop}). Thus, knowing that a distribution is log-concave gives much  information about the distribution. In this article, the ordered elements in several one-dimensional Coulomb gas ensembles arising in probability and mathematical physics are shown to have log-concave distributions. 

Many new and exotic probability distributions have arisen in random matrix theory and related areas in the last few decades. Usually these distributions are described as weak limits of random variables in some discrete or continuous finite systems that are growing in size. Even when there is an explicit formula for the density of the limiting distribution, it is often too complicated. Further, in the discrete setting, log-concavity of various sequences has attracted much recent attention (see \cite{Mas72, AHK18, HSW22, ALOV24}), but there are many other conjectures as yet unresolved. Our main contributions in this paper are two-fold:
\begin{enumerate}

 \item We show the log-concavity of many of these exotic distributions. Examples include $\beta$ versions of  Tracy-Widom distributions (including the classical cases of $\beta=1,2,4$, where the result is already new), finite dimensional distributions of the Airy-2 process,  passage time distributions in integrable models of last passage percolation, and the Airy distribution. This adds to our knowledge of these important distributions. Even in the important case of the $\beta=2$ Tracy-Widom distribution, log-concavity was only partially known (see \cite{BLS17}).
 
 \item From the log-concavity of passage times in last passage percolation with geometric weights, we derive the log-concavity of the Poissonized length of the longest increasing subsequence of a uniform random permutation. The motivation for this result comes from a conjecture of Chen~\cite{Che08}, to the effect that the distribution of the longest increasing subsequence of a uniform random permutation of $\{1,\ldots ,n\}$, is itself log-concave. This conjecture has attracted the attention of combinatorialists, see for example B\'{o}na, Lackner and Sagan~\cite{BLS17}. As far as we know, ours is the first positive result in this direction.

\end{enumerate}

After the first version of this paper was publicly posted, several works have built on our methods and results, including \cite{XW24, DLM24}.


\section{Results in the discrete setting}
\subsection{Chen's conjecture}

Let $\mathcal{S}_n$ be the symmetric group on $[n]$, i.e., the set of all permutations of $[n]=$ $\{1,2,\dots,n\}$. Let $\ell_n(\sigma)$ denote the length of the longest increasing subsequence of the permutation $\sigma\in\mathcal{S}_n$. For example, if $\sigma=42135$, then $\ell_5(\sigma)=3$ as $2,3,5$ is an increasing subsequence of length $3$. 
The asymptotics of $\ell_n(\sigma)$ for a uniformly chosen random permutation is very well understood. The work of Logan and Shepp \cite{LS77}, Vershik and Kerov \cite{VK77,VK85} shows that $\frac{\ell_n(\sigma)}{\sqrt{n}}\rightarrow 2$ in probability and expectation as $n\rightarrow \infty$.
Baik, Deift and Johansson \cite{BDJ98} prove that $\ell_n(\sigma)$ after appropriate scaling and centering converges in distribution to $TW_2$. 
Romik's book \cite{Romik15} gives a wide-ranging view of many aspects of longest increasing subsequences.

Define 
\begin{align*}
    L_{n,k}=\{\sigma\in\mathcal{S}_n:\ell_n(\sigma)=k\}\quad \mbox{and}\quad \ell_{n,k}=|L_{n,k}|.
\end{align*}
Chen \cite{Che08} made the following conjecture. See \cite{BLS17} for more about the conjecture.

\begin{conjecture}[Chen]\label{conj: Chen conjecture}
For any fixed $n$, the sequence $\ell_{n,1}, \ell_{n,2}, \ldots, \ell_{n,n}$ is log-concave.
\end{conjecture}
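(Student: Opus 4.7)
The plan is to attack Chen's conjecture by first reducing it to a statement about random Young diagrams and then exploiting integrability. By the RSK correspondence, a uniformly random permutation $\sigma\in\mathcal{S}_n$ produces a random partition $\lambda\vdash n$ distributed according to the Plancherel measure $\mathrm{Plan}_n$, with $\ell_n(\sigma)=\lambda_1$. Hence $\ell_{n,k}/n!=\mathrm{Plan}_n(\lambda_1=k)$, and Chen's conjecture is equivalent to log-concavity in $k$ of the law of $\lambda_1$ under $\mathrm{Plan}_n$. A direct injective/combinatorial attack on pairs of standard Young tableaux whose shapes have first row $k{-}1,\,k,\,k{+}1$ looks forbidding given the size constraint $|\lambda|=n$, so I would instead route through a continuous parameter that decouples the size from the first row.

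The natural first subgoal is the Poissonized version: let $N\sim\mathrm{Poisson}(\theta)$ and consider $\lambda$ under the Poissonized Plancherel measure $\mathrm{PP}_\theta$. The marginal of $\lambda_1$ is
$$p_\theta(k)=\sum_{n\ge k} e^{-\theta}\,\frac{\theta^n}{n!}\cdot\frac{\ell_{n,k}}{n!}.$$
I would try to show $k\mapsto p_\theta(k)$ is log-concave for every $\theta>0$ via Johansson's chain of identities. The passage time of last passage percolation (LPP) with i.i.d.\ geometric weights on an $m\times n$ grid equals in distribution the maximum of a discrete $\beta=2$ orthogonal-polynomial ensemble on $\Z_+^m$ (the Meixner ensemble), whose joint mass is proportional to a Vandermonde squared times independent negative-binomial weights; both factors are discretely log-concave in the appropriate sense. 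The goal is to show that the ordered Meixner ensemble is jointly log-concave on its Weyl chamber, and then that marginals of a jointly log-concave distribution (in particular the maximum) remain log-concave, yielding log-concavity of geometric LPP passage times. An appropriate scaling limit that takes geometric LPP to the Poissonized longest increasing subsequence then transfers log-concavity to $p_\theta$, since log-concavity on $\Z$ is preserved under distributional limits (via the inequality $a_k^2\ge a_{k-1}a_{k+1}$ passing to the limit).

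The main obstacle is de-Poissonization. The expression above represents $p_\theta$ as a Poisson-weighted convex combination over $n$ of the slices $k\mapsto \ell_{n,k}/n!$, and log-concavity in $k$ of such a mixture does \emph{not} in general imply log-concavity in $k$ of the individual components; elementary counterexamples are easy to construct. To recover Chen's conjecture itself, one would need either an inversion of the Poissonization that respects the log-concave structure (perhaps a saddle-point/Tauberian analysis leveraging the known asymptotic concentration of $N$ at $\theta$), or a fundamentally combinatorial argument on SYT pairs that is finer than RSK alone. I expect this last step to be the genuine difficulty, and it is plausible that the route above delivers only the Poissonized analogue of Chen's conjecture, which is already a new positive result in the direction of the full conjecture.
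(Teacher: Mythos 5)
You are right that de-Poissonization is the genuine obstacle, and the paper does \emph{not} close this gap either: Conjecture~\ref{conj: Chen conjecture} remains open. What the paper actually proves is precisely the Poissonized analogue that you identify as the reachable subgoal (Theorem~\ref{thm: Poisson Plancherel log-concave}, which for $\beta=2$ gives log-concavity of $\lambda_1$ under the Poissonized Plancherel measure), plus the modest asymptotic statement in Theorem~\ref{thm: Chen conjecture partial proof}. Your route to that subgoal---RSK, pass to the Meixner ensemble via geometric LPP, prove log-concavity there, take the $q=\alpha/n^2$ scaling limit, and invoke preservation of log-concavity under distributional limits on $\Z$---is the same route the paper follows, with the scaling step carried out (and generalized to all $\beta>0$) in Theorem~\ref{thm: Meixner limit to Plancherel}.

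The one step in your sketch that is not a theorem as stated is ``the ordered Meixner ensemble is jointly log-concave on its Weyl chamber, and marginals of a jointly log-concave distribution remain log-concave.'' On $\R^n$ this is the Pr\'ekopa--Leindler inequality, but on $\Z^n$ there is no universally agreed notion of joint log-concavity and no off-the-shelf marginalization theorem; this is exactly the difficulty the paper calls out before stating Theorem~\ref{thm:gen weight}. The paper bridges it by invoking the discrete Brunn--Minkowski inequality of Halikias--Klartag--Slomka (Result~\ref{thm: Klartag result}): one takes $f,g,h,k$ to be the Meixner weight $\prod_{i<j}(x_j-x_i)^2\prod_j w(x_j)$ restricted to the shifted Weyl chambers $S_{k-1},S_{k+1},S_k,S_k$, and checks the hypothesis $f(x)g(y)\le h(\lfloor\tfrac{x+y}{2}\rfloor)\,k(\lceil\tfrac{x+y}{2}\rceil)$ directly, which requires a parity case analysis on $x_i+y_i$ and $x_j+y_j$ to show the Vandermonde factors satisfy the floor/ceiling version of log-concavity. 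Without this (or an equivalent discrete Pr\'ekopa--Leindler input) the chain from joint structure to marginal log-concavity does not close. If you supply that ingredient, your outline matches the paper's proof of the Poissonized statement; it still leaves Chen's conjecture itself open, as you anticipated.
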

In other words, the conjecture states that the distribution of $\ell_n(\sigma)$, where $\sigma$ is uniformly chosen random permutation, is log-concave. B\'ona-Lackner-Sagan~\cite{BLS17}  made a similar conjecture when $\sigma$ is a uniformly chosen random involution. We consider both problems in the setting of Young diagrams.

Let $\Lambda_n$ denote the set of integer partitions of $n$, also identified with Young diagrams having $n$ boxes. Let $\Lambda=\cup_{n=0}^{\infty}\Lambda_n$. Elements of $\Lambda_n$ are of the form $\lambda=(\lambda_1,\lambda_2,...,\lambda_{\ell},0,0...)$ where $\lambda_1\geq\lambda_2\geq\dots\geq\lambda_{\ell}\geq 1$ are positive integers and $\sum_{i}\lambda_i=n$. We write   $\lambda\vdash n$ to mean $\lambda\in \Lambda_n$.  
Given a partition $\lambda\vdash n$, let $d_{\lambda}$ denote the number of standard Young tableaux of shape $\lambda$. 

We consider the $\beta$-Plancherel measure (any real $\beta>0$) $\mu_n^{(\beta)}$ on $\Lambda_n$ defined by,
\begin{align}\label{eq: beta plancherel measure}
    \mu_n^{(\beta)}(\lambda):=\frac{d_{\lambda}^{\beta}}{\sum\limits_{\tau\vdash n}d_{\tau}^{\beta}},\quad \lambda\in\Lambda_n.
\end{align}

$\beta$-Plancherel measures have been studied previously in \cite{BaikRains01, Regev81}. For $\beta=2$, this is the Plancherel measure which arises in representation theory. The Plancherel measure on partitions $\Lambda_n$ arises naturally and is well studied in representation–theoretic, combinatorial, and probabilistic problems \cite{VK77, LS77, BOO99}. 
By the Robinson-Schensted correspondence \cite{Sta99:book},  Conjecture \ref{conj: Chen conjecture} is equivalent to 
\begin{align}\label{eq: RSK chen conjecture}
\mu_n^{(2)}(\lambda_1=k-1)\mu_n^{(2)}(\lambda_1=k+1)\leq (\mu_n^{(2)}(\lambda_1=k))^2,    
\end{align}
which is the log-concavity of the distribution of length of first row under the Plancherel measure $\mu_{n}^{(2)}$ on $\Lambda_n$. The corresponding inequality for $\beta=1$ is equivalent to the B\'ona-Lackner-Sagan conjecture on involutions \cite[Conjecture $1.2$]{BLS17}.

One of our main results is that the distribution of $\lambda_1$ is  log-concave for a family of mixtures of $\mu_{n}^{(\beta)}$. For $\beta=2$, the mixture is a Poissonization, which has been studied before~\cite{BOO99, BDJ98}. In fact, the limiting distribution of fluctuations of $\ell_n(\sigma)$ is derived in \cite{BDJ98} using the determinantal structure of Poissonized Plancherel measure on $\Lambda$.

For the rest of the article, we assume $\NN=\{0,1,2,\dots\}$. For parameters $\alpha,\beta>0$, consider the family of probability measures ${\nu_{\alpha,\beta}}$ on $\NN$ defined such that,
\begin{align}\label{eq: mixture distribution}
    {\nu_{\alpha,\beta}(k)}=\frac{1}{Z_{\alpha,\beta}}\alpha^k{\sum\limits_{\lambda\vdash k}(d_{\lambda}/k!)^{\beta}}.
\end{align}

 That $Z_{\alpha,\beta}$ is finite follows from  $\max\limits_{\lambda\vdash k}d_{\lambda}\leq \sqrt{k!}$ (easy consequence of the identity $\sum_{\lambda\vdash k}d_{\lambda}^2=k!$) and $|\Lambda_k|\le e^{C\sqrt{k}}$ (see pp. 316-318 of \cite{apostol}). We define the mixture of $\mu_{n}^{\beta}$, denoted as ${M^{(\alpha,\beta)}}$, to be the probability measure on $\Lambda$, where $X\sim\nu_{\alpha,\beta}$ and sample $\lambda\in\Lambda_{X}$ under $\mu_{X}^{(\beta)}$. For $\beta=2$, note that $\nu_{\alpha,2}$ is the Poisson($\alpha$) distribution and hence $M^{(\alpha,2)}$ is the Poissonized Plancherel measure with $\alpha$ being the Poisson parameter. Our first main result is the following. 

\begin{theorem}\label{thm: Poisson Plancherel log-concave}
      For any $i\geq 1$ and $\alpha,\beta>0$, the distribution of $\lambda_i$ under the probability measure $M^{(\alpha,\beta)}$ is log-concave. 
 \end{theorem}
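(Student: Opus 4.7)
The plan is to reformulate $M^{(\alpha,\beta)}$ as a discrete $\beta$-Coulomb gas with a log-concave single-particle weight, and then show that each ordered particle of such a gas has a log-concave marginal distribution.

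First I would fix $N$ large and restrict to partitions $\lambda$ with at most $N$ nonzero parts. Under the bijection $\lambda \leftrightarrow (x_1,\ldots,x_N)$ with $x_i := \lambda_i + N - i$, the image is the set of strictly decreasing $N$-tuples in $\mathbb{N}$. The hook-content identity
\[
\frac{d_\lambda}{|\lambda|!} = \frac{\prod_{i<j}(x_i-x_j)}{\prod_i x_i!},
\]
together with $|\lambda| = \sum_i x_i - \binom{N}{2}$, shows that the pushforward of the restricted measure $M^{(\alpha,\beta)}\big|_{\ell(\lambda)\le N}$ is proportional to
\[
\prod_{i<j}(x_i-x_j)^\beta \prod_{i=1}^N w(x_i), \qquad w(x) := \frac{\alpha^x}{(x!)^\beta},
\]
on strictly decreasing $N$-tuples in $\mathbb{N}$. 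A direct computation gives $\log w(x+1) + \log w(x-1) - 2\log w(x) = -\beta\log\frac{x+1}{x} \le 0$, so $w$ is log-concave on $\mathbb{N}$.

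The central step would then be to establish that in such a discrete $\beta$-ensemble the marginal law of each ordered particle $x_{(i)}$ is log-concave on $\mathbb{Z}$. The conceptual template is the continuous analogue: on the Weyl chamber $\{x_1>\cdots>x_N\}$, the logarithm of the joint density $\prod_{i<j}(x_i-x_j)^\beta \prod_i \tilde w(x_i)$ is a sum of concave functions, since $\beta\log(x_i - x_j)$ is jointly concave on $\{x_i > x_j\}$ and $\log\tilde w$ is concave by assumption. Hence the joint density is log-concave on the chamber, and Prekopa's theorem gives log-concavity of every one-dimensional marginal, in particular of $x_{(i)}$. To transport this to the discrete ensemble, I would approximate $w$ by a sequence of continuous log-concave densities $\tilde w_\epsilon$ (for instance by convolving $\sum_x w(x)\delta_x$ with a small log-concave mollifier), apply the continuous log-concavity to the resulting continuous $\beta$-ensemble, and pass $\epsilon \to 0$, using that discrete log-concavity on $\mathbb{Z}$ is preserved under weak limits whose supports are intervals. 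The hard part will be this discrete-to-continuous passage, since a clean discrete Prekopa-Leindler inequality is not directly available; an alternative route would be an inductive argument integrating out one coordinate at a time while exploiting log-concavity of each one-dimensional conditional on the chamber.

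Finally, since $\lambda_i = x_{(i)} - N + i$ is a deterministic shift of $x_{(i)}$, it inherits log-concavity under $M^{(\alpha,\beta)}\big|_{\ell(\lambda)\le N}$. Letting $N\to\infty$, and noting that $\ell(\lambda)<\infty$ almost surely, the conditional measures converge pointwise to $M^{(\alpha,\beta)}$, so the pmfs of $\lambda_i$ converge pointwise; log-concavity on $\mathbb{Z}$ then passes to the pointwise limit because every non-negative integer value of $\lambda_i$ has positive mass under $M^{(\alpha,\beta)}$, so the limiting support is an interval of $\mathbb{Z}$.
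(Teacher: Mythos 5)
Your reformulation of $M^{(\alpha,\beta)}$ as a discrete $\beta$-ensemble is correct, and it is in fact a cleaner route than the paper's. Writing $M^{(\alpha,\beta)}(\lambda)\propto \alpha^{|\lambda|}(d_\lambda/|\lambda|!)^\beta$ and applying the Frobenius determinant formula after the shift $x_i=\lambda_i+N-i$ on $\{\ell(\lambda)\le N\}$, one obtains exactly the ensemble $\prod_{i<j}(x_i-x_j)^\beta\prod_i w(x_i)$ with the Charlier-type weight $w(x)=\alpha^x/(x!)^\beta$, and your verification that $w$ is log-concave on $\mathbb N$ is right. The paper instead reaches $M^{(\alpha,\beta)}$ as a nontrivial $n\to\infty$ limit of the ensemble $\gamma_{n,q,\beta}$ with geometric weight $q^x$, $q=\alpha/n^\beta$ (Theorem~\ref{thm: Meixner limit to Plancherel}, a $\beta$-generalization of Johansson's Meixner-to-Plancherel limit), whereas your $N\to\infty$ step is just a total-variation exhaustion and requires no separate limit theorem. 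Both approaches then funnel into the same core assertion: ordered particles of discrete $\beta$-ensembles with log-concave interaction $Q$ and log-concave weight $w$ have log-concave marginal pmfs, which is precisely the paper's Theorem~\ref{thm:gen weight}.

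The genuine gap is at the step you yourself flag as the hard one, and your two suggested routes do not close it. The mollification sketch fails at the outset: if $\phi_\epsilon$ has support of width less than $1$ (which you need, so that the continuous marginal density near an integer $k$ approximates the discrete pmf at $k$), then $\bigl(\sum_x w(x)\delta_x\bigr)*\phi_\epsilon$ is a sum of disjoint bumps and is manifestly not log-concave on $\mathbb R$, so Prekopa's theorem gives nothing; if instead $\phi_\epsilon$ is broad enough that the mollified weight is log-concave, the continuous marginal no longer tracks the discrete one. Note also that as measures on $\mathbb R$, the only log-concave probability measures supported in $\mathbb Z$ are Dirac masses, so ``log-concave on $\mathbb Z$'' necessarily means the sequence condition $p_k^2\ge p_{k-1}p_{k+1}$ and cannot be obtained by a naive weak-limit of log-concave measures without an explicit quantitative argument. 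The ``integrating out one coordinate at a time'' alternative is morally right but requires a genuine \emph{discrete} Prekopa-type inequality, which is exactly what is at stake. This is where the paper's actual technical input lives: it invokes the discrete Brunn--Minkowski/Prekopa--Leindler inequality on $\mathbb Z^n$ of Halikias, Klartag and Slomka (Result~\ref{thm: Klartag result}), applied at $s=1/2$, and verifies its hypothesis by a floor/ceiling case analysis showing that the chamber $\{x_1<\cdots<x_n=k\}$ behaves well under the integer midpoint operation and that $Q(x)=x^\beta$, $w$ satisfy the pointwise four-function condition there. That verification (the proof of Theorem~\ref{thm:gen weight}) is the missing piece in your argument; your reduction to it is sound and, as noted, dispenses with the Johansson-type limit that the paper uses.
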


 For $\beta=2$ and $\beta=1$ in Theorem \ref{thm: Poisson Plancherel log-concave}, we obtain Poissonized version of Chen's Conjecture and a certain mixture version of B\'{o}na-Lackner-Sagan's conjecture respectively. This neither implies Chen's conjecture nor is implied by it. However, when $\alpha=n$, the measure $\nu_{\alpha,2}$ has mean $n$ and standard deviation $\sqrt{n}$, therefore $M^{(n,2)}$ is quite close to $\mu_n^{(2)}$. In that sense, Theorem~\ref{thm: Poisson Plancherel log-concave} supports Chen's conjecture and even suggests that it may  strengthened to log-concavity of $\lambda_i$ for any $i$, under $\mu_n^{(\beta)}$ for general $\beta>0$.

It was remarked in \cite{BLS17} that proving log-concavity of $TW_2$ distribution (which is the limiting distribution of fluctuations of $\ell_n(\sigma)$) could be a possible approach to prove Conjecture \ref{conj: Chen conjecture}. What is definitely true is that for Conjecture \ref{conj: Chen conjecture} to be true, $TW_2$ has to be log-concave. 

\begin{lemma}\label{lem: Discrete to continuous}
    Let $\{X_n:n\in\NN\}$ be $\Z$-valued log-concave random variables and $\frac{X_n-a_n}{b_n}\overset{d}{\rightarrow} Y$, where $Y$ is a random variable with density function $f$ and $a_n,b_n$ are some sequences. Then $f$ is log-concave.
\end{lemma}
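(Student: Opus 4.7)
The strategy is to lift the discrete log-concavity of $p_n(k):=P(X_n=k)$ to log-concavity of $f$ by considering ``sliding-window'' probabilities, which transfer cleanly under weak convergence. First observe that $b_n\to\infty$ and (WLOG) $b_n>0$, since a weak limit with a density precludes support on a lattice with bounded-below spacing.

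The key input is the classical Davenport--P\'olya theorem: the convolution of two log-concave non-negative sequences on $\Z$ (with no internal zeros) is again log-concave. Applying this to $p_n\ast\mathbf{1}_{[0,L-1]}$ shows that for each $L\in\N$,
\[
Q_n^{L}(s):=P\bigl(X_n\in[s,s+L-1]\bigr)
\]
is log-concave in $s\in\Z$; in particular $Q_n^{L}\!\bigl(\tfrac{s_1+s_2}{2}\bigr)^{\!2}\ge Q_n^{L}(s_1)\,Q_n^{L}(s_2)$ whenever $s_1+s_2$ is even.

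To pass this to the limit, fix $\delta>0$ and $t_1,t_2\in\R$ with midpoint $\tau=(t_1+t_2)/2$. Set $L_n:=\lceil 2\delta b_n\rceil$ and pick same-parity integers $s_n^i:=\lfloor a_n+b_n(t_i-\delta)\rfloor$ for $i=1,2$ (adjusting $s_n^2$ by $\pm 1$ if needed for parity, which is absorbed as $b_n\to\infty$), so that $[(s_n^i-a_n)/b_n,(s_n^i+L_n-1-a_n)/b_n]\to[t_i-\delta,t_i+\delta]$. Let $m_n:=(s_n^1+s_n^2)/2\in\Z$, whose window tends to $[\tau-\delta,\tau+\delta]$. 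Since $Y$ has a continuous CDF $F$, weak convergence of $\mu_n:=\text{law}((X_n-a_n)/b_n)$ to $\mu:=\text{law}(Y)$ yields $Q_n^{L_n}(s_n^i)\to g_\delta(t_i)$ and $Q_n^{L_n}(m_n)\to g_\delta(\tau)$, where $g_\delta(t):=F(t+\delta)-F(t-\delta)$. Passing the discrete log-concavity to the limit gives
\[
g_\delta(\tau)^2\ge g_\delta(t_1)\,g_\delta(t_2).
\]
Since $\mu$ is atom-free, $g_\delta$ is continuous, so this midpoint log-concavity upgrades to full log-concavity of $g_\delta$ on $\R$ by the standard dyadic-plus-continuity argument.

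Finally let $\delta\downarrow 0$. Lebesgue differentiation gives $g_\delta(t)/(2\delta)\to f(t)$ for a.e.\ $t$. Define $\tilde f(t):=\liminf_{\delta\downarrow 0} g_\delta(t)/(2\delta)$; then $\tilde f=f$ a.e., and the elementary bound $\liminf_\delta(a_\delta b_\delta)\ge(\liminf_\delta a_\delta)(\liminf_\delta b_\delta)$ for non-negative $a_\delta,b_\delta$, combined with the log-concavity of each $g_\delta/(2\delta)$, shows that $\tilde f$ satisfies the log-concavity inequality at every triple in $\R\times\R\times[0,1]$. Thus $\tilde f$ is a log-concave representative of $f$. \emph{Main obstacle:} The only nontrivial ingredient is the Davenport--P\'olya closure of log-concavity under convolution; the rest is careful bookkeeping under scaling, weak convergence, and Lebesgue differentiation.
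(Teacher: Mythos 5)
Your proposal is correct, and it takes a genuinely different route from the paper. The paper argues by contradiction: if $f$ fails midpoint log-concavity at a pair $(x,y)$ of Lebesgue points, one passes to small intervals around $x$, $y$, $\frac{x+y}{2}$, observes that the corresponding window probabilities $\PP((X_n-a_n)/b_n\in\cdot)$ converge to $\mu_f$ of those intervals, and then invokes the Halikias--Klartag--Slomka discrete Brunn--Minkowski inequality (Result~\ref{thm: Klartag result}, applied with $f=p_n\ind_A$, $g=p_n\ind_B$, $h=k=p_n\ind_C$) to contradict the assumed failure. You instead argue directly: log-concavity of $p_n$ plus the Davenport--P\'olya/Fekete fact that convolution of log-concave sequences is log-concave gives log-concavity of the window probabilities $Q_n^L(s)=(p_n*\ind)(s)$; passing to the limit yields midpoint log-concavity of $g_\delta(t)=F(t+\delta)-F(t-\delta)$, upgraded to full log-concavity by continuity; then $\delta\downarrow 0$ with Lebesgue differentiation recovers a log-concave representative of $f$. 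The two arguments are morally the same --- both reduce to log-concavity of interval masses of $X_n$ and then transport it via weak convergence --- but your key discrete input (Davenport--P\'olya) is more elementary and classical than the paper's (HKS), whereas the paper's route is consistent with its use of HKS elsewhere and is shorter because the contradiction structure avoids the final Lebesgue-differentiation and $\liminf$ bookkeeping. Your explicit note that $b_n\to\infty$ (and WLOG $b_n>0$, by replacing $X_n$ with $-X_n$) is a point the paper leaves implicit but which both proofs actually need so that the discrete windows contain lattice points. One small wording note: as you define it, $Q_n^L=p_n*\check{\ind}_{[0,L-1]}$ rather than $p_n*\ind_{[0,L-1]}$, but since reflection preserves log-concavity this is immaterial.
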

By the above lemma, Theorem $1$ of \cite{BDJ98} and Theorem~\ref{thm: Poisson Plancherel log-concave}, it follows that $TW_2$ is log-concave.  In this paper, we give multiple proofs that $TW_2$ and its $\beta$ generalizations are log-concave, the proof of Corollary \ref{cor: Stoch Airy operator} being the simplest one.  Although Tracy-Widom distributions are widely studied, the log-concavity property does not seem to have been observed before. In fact, in \cite{BLS17}, only a partial proof (due to P. Deift) is given, showing the log-concavity of $TW_2$ on the positive half line.

The reason that these specific mixtures are amenable to study is that they are related to the Meixner ensemble (defined below). In particular, Theorem~\ref{thm: Poisson Plancherel log-concave} follows from the log-concavity of individual particles in the Meixner ensemble. The Meixner ensemble falls inside two larger classes of particle systems on $\mathbb Z$, namely, discrete ensembles that resemble Coulomb gases and Schur measures. In both of these classes, we show log-concavity of marginals.

As additional evidence to Conjecture \ref{conj: Chen conjecture}, we prove the following partial result.

\begin{theorem}\label{thm: Chen conjecture partial proof}
    Fix $j\in\NN$. Then $\exists N=N(j)$ such that, $\forall n\geq N$ and $k\in\{n-j,\dots,n\}$,
    \begin{align}\label{eq: Chen conj result}
        \mu_n^{(2)}(\lambda_1=k-1)\mu_n^{(2)}(\lambda_1=k+1)\leq (\mu_n^{(2)}(\lambda_1=k))^2.
    \end{align}
\end{theorem}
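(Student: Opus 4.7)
The plan is to exploit the rigid combinatorial structure of partitions $\lambda \vdash n$ whose first row is close to $n$. For $n \geq 2(j+1)$ and any $0 \leq i \leq j+1$, every $\nu \vdash i$ satisfies $\nu_1 \leq i \leq n-i$, so the partitions of $n$ with $\lambda_1 = n-i$ are in bijection with partitions $\nu \vdash i$ via $\lambda = (n-i,\nu_1,\nu_2,\ldots)$. Setting $A_i(n) := \sum_{\nu \vdash i} d_{(n-i,\nu)}^2$ and using the classical identity $\sum_{\tau \vdash n} d_\tau^2 = n!$, we have $\mu_n^{(2)}(\lambda_1 = n-i) = A_i(n)/n!$. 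Under this rewriting, the inequality to be proved at $k = n-i$ for $1 \leq i \leq j$ reduces to
\begin{equation*}
A_{i+1}(n)\, A_{i-1}(n) \;\leq\; A_i(n)^2,
\end{equation*}
while the boundary case $k = n$ is automatic since $\mu_n^{(2)}(\lambda_1 = n+1) = 0$.

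Next I would extract the leading-order asymptotic of $A_i(n)$ for fixed $i$ via the hook length formula. For the shape $(n-i,\nu)$, the hooks split into three groups: the hooks at $(1,c)$ with $\nu_1 < c \leq n-i$ multiply to $(n-i-\nu_1)!$; the hooks at $(1,c)$ with $1 \leq c \leq \nu_1$ equal $n-i-c+\nu'_c+1$ (with $\nu'$ the conjugate of $\nu$), forming a polynomial of degree $\nu_1$ in $n$ with leading coefficient $1$; and the hooks in rows $\geq 2$ coincide with the hooks of $\nu$ and multiply to $i!/d_\nu$. Combining these against $n!/(n-i-\nu_1)! = n(n-1)\cdots(n-i-\nu_1+1)$ yields
\begin{equation*}
d_{(n-i,\nu)} \;=\; \frac{d_\nu}{i!}\,n^i \,+\, O(n^{i-1}) \qquad (n \to \infty),
\end{equation*}
so squaring, summing over $\nu \vdash i$, and invoking $\sum_{\nu \vdash i} d_\nu^2 = i!$ gives $A_i(n) = n^{2i}/i! + O(n^{2i-1})$.

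Finally, the leading terms compare as
\begin{equation*}
\lim_{n \to \infty} \frac{A_{i+1}(n)\, A_{i-1}(n)}{A_i(n)^2} \;=\; \frac{(i!)^2}{(i+1)!\,(i-1)!} \;=\; \frac{i}{i+1} \;<\; 1,
\end{equation*}
which is exactly the log-concavity ratio of the Poisson mass function $i \mapsto 1/i!$. Thus for each $i \in \{1,\ldots,j\}$ there exists a threshold $N_i \geq 2(j+1)$ beyond which the $i$th inequality holds strictly, and the theorem follows with $N(j) = \max(N_1,\ldots,N_j)$. The only non-routine step is verifying the polynomial structure and correct leading coefficient of $d_{(n-i,\nu)}$ in $n$; once this is in place, the strict inequality $i/(i+1) < 1$ absorbs the $O(1/n)$ error term uniformly over the finite range $i \in \{1,\ldots,j\}$, so that the required finite maximum over thresholds causes no difficulty.
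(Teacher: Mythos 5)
Your proof is correct and follows essentially the same route as the paper: fix $i = n-k$, reduce to the bijection between partitions with $\lambda_1 = n-i$ and partitions $\nu \vdash i$, establish $d_{(n-i,\nu)} \sim \frac{d_\nu}{i!}n^i$, and conclude that the log-concavity ratio tends to $i/(i+1) < 1$. The only difference is computational: you extract the asymptotic of $d_{(n-i,\nu)}$ from the hook length formula, while the paper invokes the Frobenius determinant formula \eqref{eq: Frobenius det formula} for the same purpose; both yield the same leading term.
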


 In order to prove Conjecture \ref{conj: Chen conjecture}, we cannot use Theorem \ref{thm: Poisson Plancherel log-concave} as preservation of log-concavity under depoissonization or Poissonization is not guaranteed. In this direction, we provide sufficient conditions under which Poissonization of a sequence of probability measures is log-concave.

Let $\mu_0,\mu_1,\dots$ be a sequence of probability distributions on $\NN$ and let $Y\sim \mu_X$ where $X\sim \mbox{Poisson}(\lambda)$ for some $\lambda>0$. Then we say $Y$ is Poissonization of the sequence $\mu_0,\mu_1,\dots$. A natural question is under what conditions does the random variable $Y$ have log-concave distribution. We prove the following theorem which provides a sufficient condition for $Y$ to have log-concave distribution. 
\begin{theorem}\label{thm: Poissonization}
    Let $\mu_0,\mu_1,\mu_2\dots$ be such that $\forall i,j\in\NN\cup \{0\}$ and $k\geq 2$,
    \begin{align}\label{eq: poisson condition}
        \frac{\mu_i(k-1)}{i!}\frac{\mu_j(k+1)}{j!}\leq \frac{\mu_{\left\lfloor\frac{i+j}{2}\right\rfloor}(k)}{\left\lfloor\frac{i+j}{2}\right\rfloor!}\frac{\mu_{\left\lceil\frac{i+j}{2}\right\rceil}(k)}{\left\lceil\frac{i+j}{2}\right\rceil!}.
    \end{align}

Then $Y\sim\mu_{X}$ has log-concave distribution where $X\sim \mbox{Poisson}(\lambda)$.
\end{theorem}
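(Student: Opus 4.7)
The plan is to apply the one-dimensional floor/ceiling Pr\'ekopa--Leindler / Brunn--Minkowski inequality on $\Z$ due to Halikias, Klartag and Slomka \cite{HKS21}---the same tool already used to prove Theorems~\ref{thm:gen weight} and~\ref{thm:schurmeasures}. Recall that in its functional form this inequality says: if $f,g,h,m:\Z\to[0,\infty)$ satisfy
\[
f(x)g(y)\leq h\l(\l\lfloor\tfrac{x+y}{2}\r\rfloor\r) m\l(\l\lceil\tfrac{x+y}{2}\r\rceil\r) \qquad \forall\, x,y\in\Z,
\]
then $\bigl(\sum_x f\bigr)\bigl(\sum_x g\bigr)\leq \bigl(\sum_x h\bigr)\bigl(\sum_x m\bigr)$. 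The strategy is to encode the Poisson weights $\lambda^i/i!$ into these four functions so that the sum inequality becomes the desired log-concavity $\PP(Y=k-1)\PP(Y=k+1)\leq \PP(Y=k)^2$, while the pointwise hypothesis becomes precisely condition \eqref{eq: poisson condition}.

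Concretely, I would fix $k\geq 2$ and $\lambda>0$ and define, for $i\in\NN$,
\[
f(i)=\frac{\lambda^i\mu_i(k-1)}{i!}, \qquad g(j)=\frac{\lambda^j\mu_j(k+1)}{j!}, \qquad h(p)=m(p)=\frac{\lambda^p\mu_p(k)}{p!},
\]
extended by zero on $\Z\setminus\NN$. Then $\sum_i f(i)=e^{\lambda}\PP(Y=k-1)$, and analogously for $g,h,m$. The key algebraic observation is that $\lambda^{\lfloor(i+j)/2\rfloor+\lceil(i+j)/2\rceil}=\lambda^{i+j}$, so multiplying both sides of \eqref{eq: poisson condition} by the common factor $\lambda^{i+j}$ shows at once that
\[
f(i)g(j)\leq h\l(\l\lfloor\tfrac{i+j}{2}\r\rfloor\r) m\l(\l\lceil\tfrac{i+j}{2}\r\rceil\r) \qquad \forall\, i,j\in\NN,
\]
which is exactly the pointwise condition required by the HKS inequality. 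Applying it and dividing through by $e^{2\lambda}$ yields $\PP(Y=k-1)\PP(Y=k+1)\leq \PP(Y=k)^2$ for every $k\geq 2$, which is the required log-concavity.

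I do not expect a real obstacle here: the hypothesis \eqref{eq: poisson condition} has been tailored so that, once the Poisson weight $\lambda^i/i!$ is absorbed into the four functions, the HKS pointwise condition is satisfied verbatim. The one point worth emphasising is the role of the weight $\lambda^i$: because it is log-affine in $i$, we have $\lambda^p\lambda^q=\lambda^{i+j}$ exactly when $p+q=i+j$, which is precisely what the floor/ceiling pair realises and which forces the $1/i!$ factor to appear on both sides of \eqref{eq: poisson condition}. The case $k=0$ is immediate since $\PP(Y=-1)=0$; the case $k=1$ is not directly covered by the stated hypothesis but would follow by the same argument if \eqref{eq: poisson condition} is additionally assumed at $k=1$.
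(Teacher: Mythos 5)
Your proposal is correct and follows essentially the same route as the paper: you define $f,g,h,m$ exactly as the paper does (up to the innocuous constant $e^{-\lambda}$), observe that $\lfloor(i+j)/2\rfloor+\lceil(i+j)/2\rceil=i+j$ turns the hypothesis~\eqref{eq: poisson condition} into the pointwise condition of Result~\ref{thm: Klartag result} in one dimension, and sum. Your remark about the $k=1$ case is a fair observation on the theorem's hypothesis itself (the paper's proof likewise only treats $k\geq 2$), not a gap in your argument.
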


\subsection{Log-concavity in discrete ensembles}
 For $w_i,Q_{i,j}:\Z\to \R_+$ with $1\leq i<j\leq n$, define the   probability measure on $\overrightarrow{\Z}^n=\{h\in\Z^n:  h_1<h_2<\dots<h_n\}$ 
\begin{align}\label{eq: DOPE}
  \PP_{n,w,Q}(h)=  \frac{1}{Z}\prod\limits_{1\leq i<j\leq n}Q_{i,j}\left(h_j-h_i\right)\prod\limits_{j=1}^{n}w_j(h_j), \ h\in\overrightarrow{\Z}^n
\end{align}
where $Z=Z_{n,w,Q}$ is a normalisation constant. We set  $\PP_{n,w,Q}(h)=0$ for $h\in \Z^n\setminus \overrightarrow{\Z}^n$. Of course, appropriate conditions are imposed on $Q_{i,j}$ and $w_i$ for $\PP_{n,w,Q}(h)$ to exist. This can be thought of as a discrete analogue of Coulomb gas. Although most of the important examples of discrete ensembles have $Q_{i,j}=Q$ and $w_i=w$ for all $1\leq i<j\leq n$, we consider the general definition given in \eqref{eq: DOPE} in order to include examples like \eqref{eq:anotherformofdiscretebetagas}. For $Q_{i,j}(x)=Q(x)=x^2$ and $w_i(x)=w(x)$ we will refer to \eqref{eq: DOPE} as a discrete orthogonal polynomial ensemble, following Johansson~\cite{KJ02}.
 Our second main result is the following. Recall the notion of discrete log-concavity from Definition~\ref{def: disc lc}.

\begin{theorem}\label{thm:gen weight}
 Assume that $w_i(x),Q_{i,j}(x)$ are log-concave sequences on $\Z$ for all $1\leq i<j\leq n$, that is 
\begin{align}\label{eq:log conc org}
    w_i(k-1)w_i(k+1)\leq w_i(k)^2, \\
    Q_{i,j}(k-1)Q_{i,j}(k+1)\leq Q_{i,j}(k)^2,
\end{align}
for all $k\in \Z$. Then, $\PP_{n,w,Q}$ is discrete log-concave on $\Z^n$. Further, for any  $i\in[n]$, the distribution of $h_i$ under $\PP_{n,w,Q}$ is log-concave, that is
\begin{align}
    \PP_{n,w,Q}(h_i=k-1)\PP_{n,w,Q}(h_i=k+1)\leq \PP_{n,w,Q}(h_i=k)^2. \label{eq:pmf log conc}
\end{align}
\end{theorem}

\begin{remark}\label{rem: ulc}
A sequence $\{a_n\}_{n\in\NN}$ is said to be ultra-log-concave (of infinite order) if $\{n!a_n\}_{n\in\NN}$ is log-concave (cf., \cite{liggett1997ultra}). Following the proof of  Theorem \ref{thm:gen weight} verbatim, it also follows that if $Q_{i,j}(x)$ are log-concave sequences and $w_i(x)$ are ultra-log-concave sequences, then for all $i\in[n]$, the probability mass functions of $h_i$ are ultra-log-concave sequences. In fact, for any positive sequence $f(k)$, if the weight function $w(k)$ is such that $w(k)f(k)$ is log-concave, then it can also be shown easily that $\PP_{n,w,Q}(h_i=k)f(k)$ is log-concave in $k$, for all $i\in [n]$. 
\end{remark}

The following are a few examples of the discrete orthogonal polynomial ensembles ($Q_{i,j}(x)=Q(x)=x^2$ and $w_{i,j}(x)=w(x)$) that are well-studied \cite{KJ02}.

\smallskip
\textbf{Meixner ensemble:} For $m\geq n$ and $q\in[0,1]$ with $x\in\NN$, the weights $w(x)={ \binom{x+m-n}{x}}q^{x}$ in \eqref{eq: DOPE} gives us the measure $\mathbb{P}_{n,m,\mbox{Me}}$ on $\overrightarrow{\NN}^n$, known as Meixner ensemble.

\smallskip
\textbf{Charlier ensemble:} For $\alpha>0$ and $x\in\NN$, the weights $w(x)=e^{-\alpha}\frac{\alpha^{x}}{x!}$ gives us the measure $\mathbb{P}_{n,\alpha,\mbox{Ch}}$ on $\overrightarrow{\NN}^n$, known as Charlier ensemble.  

\smallskip
\textbf{Krawtchouk ensemble:} For $p\in(0,1)$ and $q=1-p$ with $K\in\NN$ and $K\geq n$, the weights $w(x)={\binom{K}{x}}p^{x}q^{K-x}$ where $x\in \mathbb{K}:=\{0,1,\dots,K\}$, gives us the measure $\PP_{n,K,p,\mbox{Kr}}$ on $\overrightarrow{\mathbb{K}}^n$, known as Krawtchouk ensemble. 

\smallskip
\textbf{Hahn ensemble:} For integers $a,K$ with $K\geq a\geq n$ and $K=a+n-1$, the weights $w(x)={\binom{x+a-n}{x}}\binom{K+a-n-x}{K-x} $ where $x\in\mathbb{K}$, gives us the measure
on $\overrightarrow{\mathbb{K}}^n$ known as Hahn ensemble.

\smallskip

In our next example, $Q(x)$ behaves like $x^{2\theta}$ for large $x$, and provides discrete analogues of $\beta$-log gases.

\textbf{Integrable discrete beta ensembles:} We now consider the probability measure, $\PP_{n}^{\theta,m}$ on $\overrightarrow{\Z}^{n,m,\theta}$ where,      
\begin{align}
  &  \overrightarrow{\Z}^{n,m,\theta}=\{(\lambda_1\geq\lambda_2\geq\dots\geq\lambda_n):\lambda_i\in \N \mbox{ and } \lambda_1\leq m+(n-1)\theta\}\nonumber,\\ \label{eq:anotherformofdiscretebetagas}
&    \PP_{n}^{\theta,m}(\lambda_1\geq\lambda_2\geq\dots\geq\lambda_n):=\frac{1}{Z_{n,m,\theta}}\prod\limits_{1\leq i<j\leq n}Q_{\theta}(\lambda_i-\lambda_j+(j-i)\theta)\prod\limits_{j=1}^{n}w(\lambda_j+(n-j)\theta),\\
 &   Q_{\theta}(x):=\frac{\Gamma(x+1)\Gamma(x+\theta)}{\Gamma(x+1-\theta)\Gamma(x)}.\nonumber
\end{align}

Here $\theta>0$ and $m\in[0,\infty]$. The weight function $w(x)$ is assumed to be positive and continuous for $x\in[0,m+(n-1)\theta]$. For $m=\infty$ case, $w(x)$ has to be decaying fast enough for $Z_{n,m,\theta}<\infty$. 
Such measures were introduced in ~\cite{BGG17} and extensively studied, due to their connections to discrete Selberg integrals and integrable probability (see Section $1$ of \cite{BGG17}). Note that for $\theta=1$ and $\theta=1/2$, we get \eqref{eq: DOPE} for $Q(x)=x^2$ and $Q(x)=x$ respectively. Note that above measure can be seen as a special case of \eqref{eq: DOPE}. Following the proof idea of Theorem \ref{thm:gen weight}, we can also show that the distribution of $\lambda_1$ under the measure $\PP_{n}^{\theta,m}$, with $w(x)$ log-concave, is log-concave. It was shown in \cite{GH19} that, if $\theta=\beta/2$ and for all $\beta\geq 1$, after appropriate scaling and centering $\lambda_1$ converges to $TW_{\beta}$. As log-concavity is preserved under scaling, centering and weak limit (Lemma \ref{lem: Discrete to continuous}), it follows that $TW_{\beta}$ is log-concave (for $\beta\geq 1$). We shall show later that log-concavity of $TW_{\beta}$ holds for all $\beta>0$ (Corollary \ref{cor: Stoch Airy operator}).

Although the above ensembles are usually defined without the ordering on $h_i$s, we order $h_i$s as we are interested in studying the rightmost elements. 
In all four examples mentioned above, $w(x)$ is easily seen to be log-concave. Hence we get the following result immediately from Theorem \ref{thm:gen weight}.
\begin{corollary}\label{cor: MeixnerCharlierrtc}
 Meixner, Charlier, Krawtchouk and   Hahn ensembles are discrete  log-concave distributions. Their one-dimensional marginals are log-concave on $\NN$. In particular, this is true for the largest points in these ensembles.
\end{corollary}

Note that in the above examples, the weights are ultra-log-concave for Charlier and Krawtchouk ensembles. Following Remark \ref{rem: ulc}, the distribution of $h_i$ is ultra-log-concave for these cases. By Theorem $1.1$ and  \cite[Proposition $1.2$]{AMM22} the following corollary which gives Poisson concentration bounds is immediate. Let $a(x):=2\frac{(1+a)\log(1+a)-a}{a^2}$ for $a\in[-1,\infty)$. 

\begin{corollary}\label{cor: Poisson concentration} 
    Let $h_i$ be the one-dimensional marginals of Charlier and Krawtchouk ensembles. Then these random variables satisfy the following bounds.
    \begin{itemize}
        \item $\mathbb{P}\l(h_i-\E [h_i]\geq t\r)\leq \exp\l(-\frac{t^2 }{2\E [h_i]}a\l(\frac{t}{\E [h_i]}\r)\r)$ for all $t\geq 0$.
        \item $\mathbb{P}\l(h_i-\E [h_i]\leq -t\r)\leq \exp\l(-\frac{t^2 }{2\E [h_i]}a\l(-\frac{t}{\E [h_i]}\r)\r)$ for  $0\leq t \leq \E[h_i]$.
        \item $Var(h_i)\leq \E[h_i]$.
    \end{itemize}
\end{corollary}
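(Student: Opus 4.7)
The plan is to reduce the corollary to a direct application of Theorem~1.1 and Proposition~1.2 of \cite{AMM22}, via Remark~\ref{rem: ulc}. The key observation to set up is that for both the Charlier and Krawtchouk ensembles, the weight $w(x)$ is not merely log-concave but ultra-log-concave, so that the marginals $h_i$ inherit ultra-log-concavity, which is known to yield Poisson-type (Bennett) concentration.

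First, I would verify ultra-log-concavity of the two weights by a one-line computation each. For the Charlier weight $w(x) = e^{-\alpha}\alpha^x/x!$, the sequence $x!\,w(x) = e^{-\alpha}\alpha^x$ is geometric and hence trivially log-concave. For the Krawtchouk weight $w(x) = \binom{K}{x}p^x q^{K-x}$ on $\{0,1,\dots,K\}$, a short calculation gives
\[
\frac{(x!\,w(x))^2}{(x-1)!\,w(x-1)\cdot (x+1)!\,w(x+1)} \;=\; \frac{K-x+1}{K-x} \;\geq\; 1,
\]
so $x!\,w(x)$ is log-concave, i.e., $w$ is ultra-log-concave. In both cases the interaction $Q_{i,j}(x) = x^2$ is log-concave on $\{1,2,\dots\}$, which is the only regime that matters since $h_j-h_i \geq 1$ for $i < j$ under the ordered measure $\PP_{n,w,Q}$.

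Second, I would invoke Remark~\ref{rem: ulc}, which asserts that under precisely these hypotheses (each $Q_{i,j}$ log-concave, each $w_i$ ultra-log-concave) the proof of Theorem~\ref{thm:gen weight} carries over to yield that the pmf of $h_i$ is ultra-log-concave on $\NN$. With the ultra-log-concavity of $h_i$ in hand, the three stated inequalities are immediate from \cite[Theorem~1.1 and Proposition~1.2]{AMM22}: the first two are exactly the one-sided Bennett bounds for $\NN$-valued ultra-log-concave random variables with the function $a(\cdot)$ parametrised by the mean, while the variance bound $\Var(h_i) \leq \E[h_i]$ is the classical consequence of ultra-log-concavity (a Poincaré-type inequality versus the Poisson).

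There is no real obstacle: once ultra-log-concavity is in place, the corollary is a citation, and establishing ultra-log-concavity reduces to the short weight computations above. The only place where one must be careful is checking that \eqref{eq:pmf log conc} and its ultra-log-concave strengthening in Remark~\ref{rem: ulc} genuinely apply at the endpoints of the support (in the Krawtchouk case, at $x=0$ and $x=K$), but this is handled automatically by the convention that ultra-log-concave sequences are extended by zero outside their support with no internal zeros.
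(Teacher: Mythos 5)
Your proposal is correct and follows the same route as the paper: observe that the Charlier and Krawtchouk weights are ultra-log-concave, propagate this to the marginals $h_i$ via Remark~\ref{rem: ulc}, and cite \cite{AMM22}. Your explicit verification of ultra-log-concavity (the geometric sequence for Charlier, the ratio $\tfrac{K-x+1}{K-x}\ge 1$ for Krawtchouk) and the remark that log-concavity of $Q(x)=x^2$ is only needed on $\{1,2,\dots\}$ are details the paper leaves implicit, but the argument is identical in substance.
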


\subsection{Log-concavity in Schur measures} Schur measures are another well-studied class of ensembles on $\Z$ that contain the Meixner and other ensembles, although they correspond only to $\beta=2$ case. They are defined using Schur polynomials $s_{\lambda}(x)$ defined for $\lambda\in \Lambda=\bigcup_n \Lambda_n$ and variables $x=(x_1,x_2\ldots)$ by
\begin{align}
    s_{\lambda}(x)=\sum_{T}x^T
\end{align}
where the sum is over semi-standard Young tableau $T$ of shape $\lambda$ and $x^T=\prod_{i}x_i^{t_{i}}$ where $t_i$ is the number of times $i$ occurs in $T$ (see  \cite[Section I.$3$]{macdonald} for details on Schur polynomials).

Given parameters $a=(a_1,a_2,\ldots)$ and $b=(b_1,b_2,\ldots )$ with $a_i,b_i\in \C$, the corresponding Schur measure on $\Lambda$ is defined by (see \cite{okounkov} or  \cite[Section~3]{johanssonnonintersecting})
\begin{align*}
   \mathbb P_{a,b}(\lambda)=\frac{1}{Z_{a,b}}s_{\lambda}(a)s_{\lambda}(b).
\end{align*}
In general, $\mathbb P_{a,b}(\lambda)$ is a complex measure. It is a probability measure under either of the following conditions:
\begin{enumerate}
    \item $a_i\ge 0$ and $b_i\ge 0$ for all $i$.
    \item $b_i=\overline{a}_{\sigma(i)}$ for all $i$, for some bijection $\sigma$ of $\{1,2,\ldots \}$ to itself.
\end{enumerate}
We shall be concerned with the first case.  

One may regard $\lambda\in \Lambda$ as a partition or as a collection of weakly ordered particles $\lambda_1\ge \lambda_2\ge \ldots $ We show that the distribution of each $\lambda_i$ is log-concave.
\begin{theorem}\label{thm:schurmeasures}
Assume that $a_i\ge 0$ and $b_i\ge 0$ for all $i$. All one dimensional marginals of the Schur measure $\mathbb P_{a,b}$ are discrete log-concave. 
\end{theorem}

For the choice $a=b=(\sqrt{\alpha},\sqrt{\alpha},\sqrt{\alpha},\dots,\sqrt{\alpha},0,0,\dots)$ with zeros after $n$ many entries, we have
\begin{align*}
    \mathbb P_{a,b}(\lambda)=(1-\alpha)^{n^2}\alpha^{|\lambda|}|\mbox{semi-standard Young tableaux of shape }\lambda \mbox{ with entries in } [n]|^2.
\end{align*}
This is a mixture of $z$- measures (which are Plancherel-like measures that arise in the representation theory of certain non-commutative groups) on partitions of a fixed number $n=|\lambda|$ by the negative binomial distribution on $n=0,1,2,\dots$ with parameter $\alpha$; see \cite[Section 2.1.4]{okounkov}, \cite{borodin2005z} and \cite{BO05} for details. One can also obtain Poissonized Plancherel measure on the set of partitions as a special case of Schur measures (see Section $2.1.4$ of \cite{okounkov}).

An important probability context in which Schur measures arises is that of last passage percolation. Let $w_{i,j}$ be independent random variables with Geometric distribution $\mathbb{P}\{w_{i,j}=k\}=(1-a_ib_j)(a_ib_j)^k$, $k\ge 0$. Define the passage time from $\mathbf{1}=(1,1)$ to $\mathbf{n}=(n,n)$  by   
\begin{align*}
L_n^{\square}:=\max\limits_{\gamma}\ell(\gamma) \qquad \mbox{ where }\ell(\gamma)=\sum\limits_{v\in \gamma}\zeta_v,
\end{align*}
and the maximum is  over all up/right oriented paths $\gamma$ in $\mathbb{Z}^2$ from $\mathbf{1}$ to $\mathbf{n}$. It is a well-known result that under $\mathbb P_{a,b}$, the rightmost particle $\lambda_1$  has the same distribution as $L_n^{\square}$ (see \cite{johanssonnonintersecting}). Then, Theorem~\ref{thm:schurmeasures} implies that $L_n^{\square}$ has log-concave distribution.

Certain choices of $a_i,b_i$ and additional symmetry constraints are of particular interest. We mention three of these, see \cite{FR07} for details.
\begin{enumerate}
\item Let $w_{i,j}$ be i.i.d. with $\mbox{Geo}(1-q)$ distribution (so $a_i=b_i=\sqrt{q}$). Then the last passage time $L_n^{\square}$ is denoted $G^{(2)}_{\mathbf{1},\mathbf{n}}$. 
    \item Let $w_{i,j}=w_{j,i}$ be otherwise independent, and have $\mbox{Geo}(1-q)$ distribution when $i\not=j$ and $\mbox{Geo}(1-\sqrt{q})$ distribution when $i=j$. The passage time from $(1,1)$ to $(n,n)$ is denoted $G^{(4)}_{\mathbf{1},\mathbf{n}}$.
    \item Fix $n$ and let $w_{i,j}=w_{n+1-i,n+1-j}$ be otherwise independent and have $\mbox{Geo}(1-q)$ distribution when $i+j\le n$ and $\mbox{Geo}(1-\sqrt{q})$ distribution when $i+j=n+1$. The passage time from $(1,1)$ to $(n,n)$ is denoted $G^{(1)}_{\mathbf{1},\mathbf{n}}$.
\end{enumerate}

Although Theorem~\ref{thm:schurmeasures} does not directly apply to the second and third situations, the proof of Theorem~\ref{thm:schurmeasures} carries over easily to cover these cases.
\begin{corollary}\label{thm:LPP models}
    $G_{\mathbf{1},\mathbf{n}}^{(1)}, G^{(2)}_{\mathbf{1},\mathbf{n}}, G^{(4)}_{\mathbf{1},\mathbf{n}}$ are log-concave distributions.
\end{corollary}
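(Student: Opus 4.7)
The plan is to treat case $(2)$ as an immediate consequence of Theorem~\ref{thm:schurmeasures}, and to handle the symmetric cases $(1)$ and $(4)$ by the same scheme, with Littlewood-type identities playing the role that the Cauchy identity plays in Theorem~\ref{thm:schurmeasures}. For $G^{(2)}_{\mathbf{1},\mathbf{n}}$, the paragraph immediately preceding the corollary already shows that $G^{(2)}_{\mathbf{1},\mathbf{n}}\stackrel{d}{=}\lambda_1$ under the Schur measure $\mathbb{P}_{a,b}$ with $a=b=(\sqrt{q},\ldots,\sqrt{q},0,0,\ldots)$ having $n$ copies of $\sqrt{q}$: indeed the normalization $(1-q)^{n^2}s_\lambda(\sqrt{q},\ldots,\sqrt{q})^2$ is exactly of the Schur form. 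Log-concavity of $G^{(2)}_{\mathbf{1},\mathbf{n}}$ is then immediate.

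For cases $(4)$ and $(1)$, my plan is to apply Knuth's RSK for symmetric matrices, and its anti-diagonal analogue, to the geometric arrays $(w_{i,j})$ described there. Combined with the appropriate Littlewood-type identity (see \cite{FR07} for the precise formulas in the LPP context), RSK produces a shape $\lambda$ whose distribution has the form $\mathbb{P}(\lambda)\propto s_\lambda(\sqrt{q},\ldots,\sqrt{q})\,c(\lambda)$, where the correction factor $c(\lambda)$ is read off from the extra geometric weights on the diagonal (respectively anti-diagonal) entries. In other words, the Cauchy-type product $s_\lambda(a)s_\lambda(b)$ that underlies the proof of Theorem~\ref{thm:schurmeasures} is replaced by a single Schur polynomial together with an additional row-wise weight. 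After the standard change of variables $h_i=\lambda_i+n-i$, the Schur polynomial (expanded via the Weyl/Jacobi--Trudi determinant) contributes the $(h_j-h_i)^2$-type interaction and a log-concave single-site factor, exactly as in Theorem~\ref{thm:schurmeasures}; provided $c(\lambda)$ also contributes a log-concave single-site factor in the $h_i$ coordinates, Theorem~\ref{thm:gen weight} applies and gives the log-concavity of $h_n=\lambda_1+n-1$, hence of $G^{(4)}_{\mathbf{1},\mathbf{n}}$ and $G^{(1)}_{\mathbf{1},\mathbf{n}}$.

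The hard part, as the remark after Theorem~\ref{thm:schurmeasures} anticipates, is precisely this last verification. In both symmetric cases, $c$ is built from the geometric law of the diagonal (or anti-diagonal) entries and factors into a row-by-row contribution that is a geometric-type weight in each $h_i$, which is log-concave on $\mathbb{Z}$. Since products of log-concave sequences remain log-concave, the combined single-site weight required by Theorem~\ref{thm:gen weight} is log-concave, and the three cases then fall out uniformly. The only remaining bookkeeping is to check that the precise Littlewood factors that appear for the $(4)$ and $(1)$ geometries do not produce any hidden non-log-concave term; given the explicit form of these factors in \cite{FR07}, I expect this to reduce to checking log-concavity of a single geometric-type expression.
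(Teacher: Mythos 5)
Your case $(2)$ is correct and matches the paper: $G^{(2)}_{\mathbf{1},\mathbf{n}}$ is an instance of Theorem~\ref{thm:schurmeasures} with $a=b=(\sqrt{q},\ldots,\sqrt{q},0,\ldots)$. For cases $(1)$ and $(4)$, your high-level plan---pass to a discrete ensemble via RSK/Littlewood and then invoke Theorem~\ref{thm:gen weight}---is essentially the route suggested by the paper's remark following the corollary, but there are two problems. The first is a factual slip: a single Schur polynomial evaluated at $n$ equal variables gives, after $h_i=\lambda_i+n-i$, a \emph{single} Vandermonde $\prod_{i<j}(h_j-h_i)$ by the Weyl dimension formula, not the $(h_j-h_i)^2$ you assert; the squared Vandermonde in case $(2)$ arises only from the product $s_\lambda(a)s_\lambda(b)$ of \emph{two} Schur polynomials. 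Moreover, the paper's proof of Theorem~\ref{thm:schurmeasures} does not expand Schur polynomials in $h$-coordinates at all---it applies Result~\ref{thm: Klartag result} in the $\lambda$-coordinates directly via the Lam--Postnikov--Pylyavskyy log-concavity of Schur polynomials---so the phrase ``exactly as in Theorem~\ref{thm:schurmeasures}'' misrepresents that argument. The exponent error is harmless in the end, since $Q(x)=x$ is still log-concave on $\Z$, but it signals a wrong picture of what the surrounding proofs actually do.

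The more substantive issue is precisely the one you flag as the hard part and then defer: you never establish that the correction factor $c(\lambda)$ produced by the diagonal (or anti-diagonal) symmetry constraint factors, in the $h_i$ coordinates, into a product of log-concave single-site weights. Saying you ``expect'' this to reduce to checking a single geometric-type expression is not a verification, and without it Theorem~\ref{thm:gen weight} cannot be invoked. The paper's remark closes exactly this gap by citing explicit formulas from the literature (Proposition~1.3 of \cite{KJ00}, Lemma~3.2 of \cite{Baik02}, and Equations~4.6 and 5.6 of \cite{FR07}): the distribution of $G^{(\beta)}_{\mathbf{1},\mathbf{n}}$ for $\beta=1,4$ coincides with that of $h_n$ under \eqref{eq: DOPE} with $Q(x)=x$ and $w(x)=q^{x/2}$, whereupon Theorem~\ref{thm:gen weight} applies immediately. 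Without invoking or rederiving these Pfaffian-ensemble identities, your proposal has the right skeleton for cases $(1)$ and $(4)$ but does not actually close the proof.
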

\begin{remark}
    One can also view this as a corollary of Theorem~\ref{thm:gen weight}. Indeed, the distribution of $G^{(\beta)}_{\mathbf{1},\mathbf{n}}$ for $\beta=2,1$ and $4$ is exactly the same as that of $h_n$ in \eqref{eq: DOPE} with $Q(x)=x^{2}, x$ and $x$ respectively with $w(x)=q^x, q^{x/2}$ and $q^{x/2}$ respectively (see Proposition $1.3$ of \cite{KJ00}, Lemma $3.2$ of \cite{Baik02} and Equations $4.6$ and $5.6$ of \cite{FR07}).  If $G_{1,m,n}^{(2)}$ denotes last passage time from $(1,1)$ to $(m,n)\in\Z^2$, it can also be shown that $G_{1,m,n}^{(2)}$ is log-concave. Using the Geometric limit to exponentials, log-concavity of passage times for exponential weights also follows.
\end{remark}
We can now describe the proof methods of the  main results stated till now.  In the next section we shall see log-concavity results for ordered elements in continuum ensembles. Those proofs are almost trivial, due to the availability of the Pr\'ekopa-Leindler inequality (which asserts that log-concavity is preserved under marginalization) as explained there. 
A key contribution of our work is to identify the right notion of discrete log-concavity, by recognizing that a recent inequality of Halikias-Klartag-Slomka \cite{HKS21} works as a discrete analogue of the Pr\'ekopa-Leindler inequality in a way that is perfectly tuned to be of use for our purposes.
Indeed, the proof of Theorem~\ref{thm:gen weight} becomes analogous to the proofs in the continuum setting, with the Halikias-Klartag-Slomka inequality replacing the 
Pr\'ekopa-Leindler inequality. 

A well known result, due to Johansson \cite{KJ00}, states that the limiting distribution of largest particle in Meixner ensemble with $q=\alpha/n^2$, converges to the length of the top row under Poissonized Plancherel measure. Theorem \ref{thm: Poisson Plancherel log-concave} is proved by generalizing this fact (that corresponds to $\beta=2$) to all $\beta>0$. This generalization is stated as Theorem~\ref{thm: Meixner limit to Plancherel} later.

\subsection{Remarks on the definition of discrete multivariate log-concavity}\label{sec: disc mult lc}

In the discrete setting, there are multiple inequivalent definitions of multivariate log-concavity in the literature (see \cite{Murota-Shioura}). Our definition~\ref{def: disc lc} is designed so as to interpret a recent inequality of Halikias, Klartag and Slomka~\cite{HKS21} (see also \cite{KL18} and \cite{GRST21} for some variants) as preservation of log-concavity under marginalization. In fact, Halikias-Klartag-Slomka prove a whole family of inequalities, of which we have chosen the most natural-looking one (the choice is $\lambda=\frac12$ in \cite[Theorem~1.2]{HKS21}). In the language of Murota \cite{Mur03:book}, a natural alternative nomenclature for our chosen notion of log-concavity on $\Z^n$ would be $L^{\#}$-log-concavity, although we do not use this terminology for simplicity.

We first explain the result of Halikias-Klartag-Slomka \cite{HKS21}, which is the main tool in the proofs.
\begin{result}[Theorem $1.2$ of \cite{HKS21}]\label{thm: Klartag result}
Let $s\in[0,1]$ and suppose that $f,g,h,k:\Z^{n}\rightarrow[0,\infty)$ satisfy
\begin{align}\label{eq:Klartag conditions}
f(x)g(y)\leq h\left(\left\lfloor s x+(1-s)y\right\rfloor\right)k\left(\left\lceil(1-s) x+s y\right\rceil\right)\quad \forall x,y\in \Z^n
\end{align}
where $\left\lfloor x\right\rfloor=\left(\left\lfloor x_1\right\rfloor, \left\lfloor x_2\right\rfloor,\dots, \left\lfloor x_n\right\rfloor\right)$ and $\left\lceil x\right\rceil=\left(\left\lceil x_1\right\rceil, \left\lceil x_2\right\rceil,\dots, \left\lceil x_n\right\rceil\right)$. Then
\begin{align*}
    \left(\sum\limits_{x\in \Z^n}f(x)\right) \left(\sum\limits_{x\in \Z^n}g(x)\right)\leq \left(\sum\limits_{x\in \Z^n}h(x)\right)\left(\sum\limits_{x\in \Z^n}k(x)\right).
\end{align*}
\end{result}\
To relate this to the Pr\'ekopa-Leindler inequality, fix some $k<n$ and define 
\begin{align*}
f_{(x_{k+1},\ldots ,x_n)}(t_1,\ldots ,t_k)&:=f(t_1,\ldots ,t_k,x_{k+1},\ldots ,x_n), \\ \hat{f}(x_{k+1},\ldots ,x_n)&:=\sum_{t_1,\ldots ,t_k} f_{(x_{k+1},\ldots ,x_n)}(t_1,\ldots ,t_k).
\end{align*}
If $f,g,h,k$ satisfy \eqref{eq:Klartag conditions}, then so do $f_{(x_{k+1},\ldots ,x_n)},g_{(y_{k+1},\ldots ,y_n)},h_{(u_{k+1},\ldots ,u_n)},k_{(v_{k+1},\ldots ,v_n)}$, provided $u_j=\left\lfloor sx_j+(1-s)y_j\right\rfloor$ and $v_j=\left\lceil sx_j+(1-s)y_j\right\rceil$. The conclusion of Result~\ref{thm: Klartag result} is precisely that $\hat{f},\hat{g},\hat{h},\hat{k}$ also satisfy \eqref{eq:Klartag conditions}.  

This was the motivation for  Definition~\ref{def: disc lc}, which  says that $f:\Z^n\to [0,\infty)$ is log-concave if the condition~\eqref{eq:Klartag conditions} holds for $s=\frac12$ and $f=g=h=k$. By the above discussion, we get
\begin{corollary}\label{cor:HKSmarginals} If $f$ is a log-concave probability mass function on $\Z^n$, then all its lower dimensional co-ordinate  marginals are also log-concave.
\end{corollary}
However, unlike in the continuum setting, marginals in other directions need not be log-concave. For example, if $f:\Z^2\to \R$ is defined by 
\begin{align*}
    f(0,0)=f(1,1)=3 \mbox{ and }f(1,0)=f(0,1)=1
\end{align*}
and $f(i,j)=0$ for all other $(i,j)$. Then $f$ is log-concave. The marginal in the $(1,1)$ direction defined by  $g(t)=\sum_xf(x,t-x)$ for $t\in \Z$ satisfies  $g(0)=g(2)=3$, $g(1)=2$ and $g(i)=0$ for all other $i$. Clearly $g$ is not log-concave. 

In fact, log-concavity as per our definition is not preserved under $\frac{\pi}{2}$-rotation  of the lattice $\Z^2$. The same example $f$ as above shows this.

\section{Results in the continuum setting}
\subsection{Log-concavity in continuum Coulomb gas ensembles}

Several interacting particle systems in statistical mechanics such as Coulomb gases, Ising model, exclusion processes, are modelled by Gibbs measures \cite{FV17}. Consider the Gibbs measure determined by positive temperature parameter $\beta\in(0,\infty)$ and a Hamiltonian function $\mathcal{H}_n:\R^n\to \R\cup \{\infty\}$ of $n$ real-valued variables 
$x=(x_1,x_2,\dots,x_n)$, given by
\begin{align}\label{eq: unordGibbs measures}
    d{\PP}_{\HH_n,\beta}(x)&\propto 
    {\exp\left\{-\beta\mathcal{H}_n(x_1,\dots,x_n)\right\}}dx_1\ldots dx_n.
\end{align}
One-dimensional $\beta$-Coulomb gases are special cases of \eqref{eq: unordGibbs measures} given by 
\begin{align}\label{eq:betacoulombgas}
    \mathcal{H}_n(x_1,\dots,x_n)=-\sum\limits_{i<j}\log|x_i-x_j|+\sum\limits_{i}V(x_i),
\end{align}
 where $V:\R\to \R\cup \{\infty\}$ is function that increases fast enough at $\pm \infty$ to ensure integrability of $d\PP_{\HH_n,\beta}(x)$. When $V$ is quadratic and $\beta=1,2,4$, the $\beta$-Coulomb gas is the joint law of eigenvalues in Gaussian orthogonal, unitary and symplectic ensembles respectively (see \cite{AGZ} for more about Gaussian ensembles). 

Although  the usual definitions of $\beta$-ensembles have $x_i$ unordered, our interest is in the ordered variables. The largest variable is often of particular interest (e.g., in the case of the Gaussian ensembles mentioned above, this would be the largest eigenvalue of a random matrix drawn from the ensemble). If the Hamiltonian $\mathcal{H}_n:\R^n\to \R\cup \{\infty\}$ of the system \eqref{eq: unordGibbs measures} is symmetric (with respect to arbitrary permutations of the coordinates), observe that the behavior of the order statistics of the random vector $X$ drawn from ${\PP}_{\HH_n,\beta}$ coincides with the behavior of the system 
\begin{align}\label{eq: Gibbs measures}
    d\overrightarrow{\PP}_{\HH_n,\beta}(x)&=\frac{1}{Z_{\HH_n,\beta}}{\exp\left\{-\beta\mathcal{H}_n(x_1,\dots,x_n)\right\}} \ind_{\mathcal{W}_n}(x) dx_1\ldots dx_n,
\end{align}
where  
$\mathcal{W}_n=\{y\in \R^n\suchthat y_1<\ldots <y_n\}$ is the Weyl chamber.

We are now in a position to formulate our key observation about log-concavity in the continuous setting. 

\begin{theorem}\label{thm: continuous log-concave} 
Consider the system \eqref{eq: Gibbs measures}, with the Hamiltonian $\mathcal{H}_n$ of form \eqref{eq:betacoulombgas}.
Suppose that $V:\R\to \R\cup \{\infty\}$ is convex. Then:
\begin{enumerate}[(1)]
\item The $\beta$-Coulomb gas  $\overrightarrow{\PP}_{\HH_n,\beta}$ is a log-concave distribution on $\R^n$.

\item The ordered points $x_{k}$ and the gaps $x_k-x_{k-1}$ of the $\beta$-Coulomb gas  have log-concave distributions on $\R$. 
\end{enumerate}
\end{theorem}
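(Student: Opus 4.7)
The plan is to verify part (1) by directly inspecting the joint density, and then derive part (2) as an immediate consequence of the closure of log-concavity under affine push-forwards.

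For part (1), I write the density of $\overrightarrow{\PP}_{\HH_n,\beta}$ with respect to Lebesgue measure explicitly:
\[
p(x) = \frac{1}{Z_{\HH_n,\beta}}\prod_{1\le i<j\le n}(x_j-x_i)^\beta \exp\!\Bigl(-\beta\sum_{i=1}^n V(x_i)\Bigr)\mathbb{1}_{\mathcal{W}_n}(x).
\]
Taking logarithms (with the convention $\log 0 = -\infty$) and working on $\R^n$, the function $\log p$ is the sum of three pieces: $\beta\sum_{i<j}\log(x_j-x_i)$, $-\beta\sum_i V(x_i)$, and the log-indicator of $\mathcal{W}_n$. The Weyl chamber $\mathcal{W}_n=\{x_1<\ldots<x_n\}$ is an intersection of open half-spaces, hence convex, so its indicator is log-concave. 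On $\mathcal{W}_n$, each term $\log(x_j-x_i)$ is concave as the composition of the concave logarithm with the linear map $(x_i,x_j)\mapsto x_j-x_i$ taking values in $(0,\infty)$; summing over $i<j$ preserves concavity. Finally, $-V$ is concave by the assumed convexity of $V$, and summing over $i$ again preserves concavity. Thus $\log p$ is a sum of concave (extended real-valued) functions, which proves log-concavity of $\overrightarrow{\PP}_{\HH_n,\beta}$ on $\R^n$.

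For part (2), I use the standard fact, a consequence of Borell's theorem or equivalently of the Pr\'ekopa--Leindler inequality (see, e.g., \cite{SW14}), that the push-forward of a log-concave measure on $\R^n$ under any affine map to $\R^d$ is again log-concave. The coordinate projection $x\mapsto x_k$ and the gap map $x\mapsto x_k-x_{k-1}$ are both linear maps from $\R^n$ to $\R$, so the resulting one-dimensional distributions are log-concave.

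I do not anticipate a serious obstacle: the only mildly delicate point is the boundary behavior where $\log(x_j-x_i)\to -\infty$, but this is entirely consistent with the extended-real-valued formulation of concavity, and the full-dimensional support of $p$ on $\mathcal{W}_n$ ensures the resulting distributions are genuinely given by log-concave densities.
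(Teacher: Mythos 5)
Your argument is correct and is essentially the same as the paper's: part (1) is proved by observing that $\log p$ is a sum of concave functions composed with linear maps plus the log-indicator of the convex Weyl chamber, and part (2) follows from stability of log-concavity under affine push-forwards, which is the Pr\'ekopa--Leindler / Borell fact invoked in the paper. Your write-up is simply a more explicit spelling-out of the paper's two-sentence sketch, including the harmless handling of the $\log 0 = -\infty$ boundary, and nothing is missing.
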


The first statement is not new-- it was already observed in the Ph.D. thesis of Wang \cite{Wang14}, and also by Chafai and Lehec~\cite[Lemma~2.5]{Chafai-Lehec}.

As sums of convex functions composed with linear maps are convex, we obtain the first part of Theorem \ref{thm: continuous log-concave}. Using the Pr\'ekopa-Leindler inequality \cite{Leindler72, Prekopa71,Prekopa73}, which implies that the marginals of log-concave distribution are log-concave, the second part of Theorem \ref{thm: continuous log-concave} follows.

A somewhat related notion is that of log-supermodularity (also called $\mbox{MTP}_2$). A probability density $f$ on $\mathbb{R}^n$ is said to be log-supermodular (i.e., $\log f$ is supermodular as defined in \cite[Definition 2.3]{FMZ24}) if 
\begin{align*}
    f(x)f(y)\leq f(x\wedge y)f(x\vee y), \mbox{ for all } x,y\in \R^n,
\end{align*}
where $x\wedge y$ and $x\vee y$ are the componentwise minimum and maximum respectively.  One implication of log-supermodularity is positive association (thanks to the FKG inequality, see \cite{FKG71, Preston74}),  which is difficult to prove otherwise. 

\begin{theorem}\label{thm: supermodularity}
Consider the system \eqref{eq: Gibbs measures}, with the Hamiltonian $\mathcal{H}_n$ of form \eqref{eq:betacoulombgas}.
For any $V$ in \eqref{eq:betacoulombgas} and any $\beta>0$, the density of $\overrightarrow{\PP}_{\HH_n,\beta}$ is log-supermodular. In particular, the points of the $\beta$-Coulomb gas are positively associated.
\end{theorem}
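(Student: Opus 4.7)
The plan is to verify log-supermodularity term by term in the expression $\log f(x) = \beta \sum_{i<j} \log(x_j - x_i) - \beta \sum_{i} V(x_i) + \text{const}$ valid on the Weyl chamber $\mathcal{W}_n$, and then invoke the FKG inequality. The target inequality to verify is
\[
f(x)f(y) \le f(x\wedge y)\,f(x\vee y)
\]
for all $x,y\in \R^n$, where $\wedge,\vee$ denote coordinatewise min and max. If either side vanishes the inequality is automatic, so one may restrict attention to points in $\mathcal{W}_n$.

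First I would note that $\mathcal{W}_n$ is a sublattice of $\R^n$: if $x_i<x_{i+1}$ and $y_i<y_{i+1}$ then both $\min(x_i,y_i) < \min(x_{i+1},y_{i+1})$ and $\max(x_i,y_i) < \max(x_{i+1},y_{i+1})$, so the indicator contributes nothing. Next, the potential term is separable, and the set-identity $\{x_i,y_i\}=\{\min(x_i,y_i),\max(x_i,y_i)\}$ gives exactly
\[
V(x_i)+V(y_i)=V(\min(x_i,y_i))+V(\max(x_i,y_i))
\]
pointwise, with no regularity or convexity hypothesis on $V$ and allowing the value $+\infty$. Hence the term $\sum_i V(x_i)$ is modular and drops out.

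The crux is then the pairwise interaction. For each pair $i<j$ I would prove
\[
(x_j-x_i)(y_j-y_i) \le \bigl(\min(x_j,y_j)-\min(x_i,y_i)\bigr)\bigl(\max(x_j,y_j)-\max(x_i,y_i)\bigr).
\]
Setting $a=x_i,b=x_j,c=y_i,d=y_j$ with $a<b$ and $c<d$, a short case analysis on the four orderings of $\{a,c\}$ and $\{b,d\}$ reduces everything to the single nontrivial identity
\[
\bigl(\min(b,d)-\min(a,c)\bigr)\bigl(\max(b,d)-\max(a,c)\bigr) - (b-a)(d-c) = (c-a)(b-d)
\]
(or its symmetric counterpart), whose right side is $\geq 0$ in the relevant case. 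Alternatively, since the two-variable function $(x_i,x_j)\mapsto \log(x_j-x_i)$ is $C^2$ on $\{x_i<x_j\}$ with mixed partial $\partial^2/\partial x_i\partial x_j = (x_j-x_i)^{-2}\geq 0$, it is supermodular there, which is the same statement. Summing over pairs $i<j$ and combining with the modular potential term completes the proof that $\log f$ is supermodular on $\mathcal{W}_n$.

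Finally, positive association follows by applying the FKG inequality of Fortuin--Kasteleyn--Ginibre (in the form extended by Preston to continuous distributions on distributive lattices), since $\mathcal{W}_n$ is a distributive sublattice of $\R^n$ and the density is log-supermodular on it; any two bounded coordinatewise nondecreasing functions of the ordered points are then nonnegatively correlated under $\overrightarrow{\PP}_{\HH_n,\beta}$. I do not anticipate a serious obstacle: the only delicate point is handling $V$ that may take the value $+\infty$ or be irregular, and this is absorbed by the elementary pointwise identity for $\sum_i V(x_i)$, so no convexity of $V$ is required — in sharp contrast with the log-concavity statement of Theorem~\ref{thm: continuous log-concave}.
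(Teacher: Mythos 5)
Your proposal is correct and matches the paper's approach: the paper's proof is stated tersely as "a direct computation using only the elementary inequality $(x_2-x_1)(y_2-y_1)\leq (x_2\vee y_2 - x_1\vee y_1)(x_2\wedge y_2 - x_1\wedge y_1)$ for $x_1<x_2$, $y_1<y_2$," which is exactly your pairwise inequality, and the paper also offers the same alternative via the mixed-partial (derivative) condition, citing [FMZ24, Prop.~2.5]. Your write-up fills in the details the paper omits (the sublattice property of $\mathcal{W}_n$, modularity of the potential term, the cross-term identity $(c-a)(b-d)$, and the explicit FKG invocation), but the decomposition and the key lemma are the same.
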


The proof is a direct computation using only the elementary inequality,
\begin{align*}
    (x_2-x_1)(y_2-y_1)\leq (x_2\vee y_2\ -\  x_1\vee y_1)(x_2\wedge y_2\ -\  x_1\wedge y_1)
\end{align*}
for any $x_1<x_2$ and $y_1<y_2$. Alternately one can check the derivative condition in \cite[Proposition 2.5]{FMZ24}. 

It is well-known that when $V(x)=x^2$, the distribution of $x_{n}$, after appropriate shifting and scaling, converges to $\mbox{TW}_{\beta}$, the $\beta$ version of Tracy-Widom distribution. For special values of $\beta=1,2,4$ this was proved by Tracy and Widom \cite{TW94}, and the case of general $\beta$ was proved by Ramirez-Rider-Vir\'{a}g~\cite{RRV06}, who defined $\mbox{TW}_{\beta}$ as the distribution of the smallest eigenvalue of the stochastic Airy operator
\begin{align*}
    \mathcal{H}_{\beta}=-\frac{d^2}{dx^2}+x+\frac{2}{\sqrt{\beta}}b'_x \;\;\; (\mbox{here }b\mbox{ is standard Brownian motion}) 
    \end{align*}
    acting on an appropriate Hilbert space (see \cite{RRV06} for details). Note that log-concavity and log-supermodularity are preserved under shifting, scaling and under weak limits ( at least if non-degenerate). As non-degenerate log-concave measures have density, we immediately get the following corollary.

\begin{corollary}\label{cor: Stoch Airy operator}
 Fix $\beta>0$.
\begin{enumerate}
    \item $TW_{\beta}$ distribution has a density and the density function is log-concave.
    \item For any $k\geq 1$, the smallest $k$ eigenvalues of $\mathcal{H}_{\beta}$ have log-concave and log-supermodular joint density and hence are positively associated.
\end{enumerate}
\end{corollary}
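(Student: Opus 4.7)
The plan is to specialize Theorem~\ref{thm: continuous log-concave} and Theorem~\ref{thm: supermodularity} to the Gaussian $\beta$-ensemble, where the potential $V(x)=x^2$ is convex, and then pass to the edge-scaling limit given by the Ramirez--Rider--Vir\'ag theorem.

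First, I would fix $n$ and consider the ordered Gaussian $\beta$-ensemble $\overrightarrow{\mathbb{P}}_{\mathcal{H}_n,\beta}$. By Theorem~\ref{thm: continuous log-concave} its joint density on the Weyl chamber $\mathcal{W}_n$ is log-concave, and by Theorem~\ref{thm: supermodularity} it is log-supermodular. Passing to the marginal onto the top $k$ coordinates $(x_{n-k+1},\ldots,x_n)$, log-concavity is preserved by the Pr\'ekopa--Leindler inequality (as already used to obtain the second part of Theorem~\ref{thm: continuous log-concave}), and log-supermodularity (MTP$_2$) is preserved on subsets of coordinates by a classical fact due to Karlin--Rinott. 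Hence for every $n$, the joint law of the top $k$ particles is both log-concave and log-supermodular.

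Next, I would apply the edge rescaling $\tilde{x}_j := n^{1/6}(2\sqrt{n}-x_{n-j+1})$, which is affine in each coordinate and permutes the order of the indices. By \cite{RRV06}, the joint law of $(\tilde{x}_1,\ldots,\tilde{x}_k)$ converges weakly to the joint law $(\Lambda_1,\ldots,\Lambda_k)$ of the $k$ smallest eigenvalues of $\mathcal{H}_\beta$. Log-concavity is preserved by affine maps and by weak convergence; so is log-supermodularity, since the only nontrivial check is the coordinatewise decreasing map $y\mapsto a-cy$, which maps $x\wedge y$ to $f(x)\vee f(y)$ and $x\vee y$ to $f(x)\wedge f(y)$, and so preserves the MTP$_2$ inequality (and a permutation of coordinates clearly does as well). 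Therefore the joint law of $(\Lambda_1,\ldots,\Lambda_k)$ is log-concave as a Radon measure and log-supermodular.

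Finally, to upgrade log-concavity of the measure to log-concavity of a density, I would invoke Borell's theorem stated in Section~1: it suffices that the limiting measure is not supported in any lower-dimensional affine subspace of $\mathbb{R}^k$. This non-degeneracy is the step that requires care and is, I expect, the main obstacle; it follows from the almost sure simplicity of the spectrum of $\mathcal{H}_\beta$ together with the absolute continuity of any finite block of eigenvalues, which can be extracted from the Riccati diffusion representation developed in \cite{RRV06}. Given this, part (2) of the corollary follows, with positive association a consequence of log-supermodularity via the FKG inequality, and part (1) is the $k=1$ specialization.
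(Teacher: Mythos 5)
Your proposal takes essentially the same route as the paper: specialize Theorem~\ref{thm: continuous log-concave} and Theorem~\ref{thm: supermodularity} to the Gaussian $\beta$-ensemble (quadratic $V$), pass to the marginal of the top $k$ ordered particles, apply the Ramirez--Rider--Vir\'ag edge limit, and use preservation of log-concavity and log-supermodularity under affine maps and weak limits together with Borell's theorem to extract a density. The paper's own proof is essentially a one-sentence remark to this effect, so you are filling in details it leaves implicit, and all the details you supply are correct: you are right that marginalizing out coordinates preserves MTP$_2$ (Karlin--Rinott), right that the coordinate reversal together with the decreasing affine rescaling $x\mapsto n^{1/6}(2\sqrt{n}-x)$ preserves MTP$_2$ because the lattice operations $\wedge$ and $\vee$ are merely swapped, and right that the only delicate point is ruling out concentration on a lower-dimensional affine subspace so that Borell's characterization produces a genuine density. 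The paper acknowledges this last point only with the parenthetical "at least if non-degenerate,'' so your flagging of it (and the suggestion to extract it from simplicity of the spectrum and the diffusion description in \cite{RRV06}) is a reasonable, if slightly heuristic, completion; just be careful that your sketch does not implicitly assume absolute continuity to prove absolute continuity.
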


Observe that much more is true: As the joint distribution of largest $k$ eigenvalues of $\beta$-ensemble with quadratic potential is log-concave, the same is true of the $k$ smallest eigenvalues of $\mathcal{H}_{\beta}$. Therefore, the gaps among the smallest $k$ eigenvalues of $\mathcal{H}_{\beta}$ are also jointly log-concave. Further, in the Laguerre/Wishart ensembles (take $V(x)=\frac{x}{2}+\l(\frac{1}{\beta}-\frac{a+1}{2}\r)\log x$ for $x>0$ in \eqref{eq:betacoulombgas}, where the parameter $a>-1$),  the smallest $k$ eigenvalues have a joint log-concave distribution, by Theorem \ref{thm: continuous log-concave}. Again taking weak limits,  we deduce that the joint distribution of $(\Lambda_{0}(\beta,a),\dots,\Lambda_{k-1}(\beta,a))$, the $k$ smallest eigenvalues of the {\em stochastic Bessel operator} (as defined in \cite{RR09}) is log-concave for $a>\frac{2}{\beta}-1$. 

Although $TW_{\beta}$ distributions are widely studied, the log-concavity property does not seem to have been noticed before. Here are some consequences that follow immediately from log-concavity, but could be difficult to prove otherwise. 
\begin{enumerate}
    \item That $\mbox{TW}_{\beta}$ has a density appears to have not been shown before (for $\beta\not\in \{1,2,4\}$). But any non-degenerate log-concave measure has density by Borell's characterization, hence Corollary~\ref{cor: Stoch Airy operator} implies that $\mbox{TW}_{\beta}$ has a density. The same applies to joint distributions of the smallest $k$ eigenvalues of $\mathcal{H}_{\beta}$ and those of the stochastic Bessel operator mentioned above.
    \item Tail bounds on $\mbox{TW}_{\beta}$ (see  \cite[Theorem~1.3]{RRV06}) trivially transfer to corresponding pointwise bounds on the density of $\mbox{TW}_{\beta}$.
    \item Further, the convergence results can be strengthened. For any $k\in \N$, the joint density $f_{\beta}$ of the smallest $k$ eigenvalues of $\mathcal{H}_{\beta}$ is log-concave. Let $f_{n,\beta}$ be the joint density of the vector $\l(n^{1/6}\l(2\sqrt{n}-\lambda_{\beta,\ell}\r)\r)_{\ell\in[k]}$ as in  \cite[Theorem $1.1$]{RRV06}. By  \cite[Proposition $2$]{CS10}, we have the following corollary strengthening the result of Ramirez-Rider-Vir\'{a}g \cite{RRV06}.
    \begin{corollary}\label{cor: uniform conv of TW densities}
    For any $\beta>0$, there exists some $a_0>0$ such that for all $a<a_0$, we have
    \begin{align*}
        \sup\limits_{x\in\R^k}e^{a\lVert x\rVert}\left|f_{n,\beta}(x)-f_{\beta}(x)\right|\rightarrow 0.
    \end{align*}
    \end{corollary}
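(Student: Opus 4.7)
The plan is to apply Proposition~2 of \cite{CS10}, which upgrades weak convergence of a sequence of log-concave probability densities on $\R^k$ (with a log-concave, full-dimensional limit) to uniform convergence weighted by $e^{a\|x\|}$ for all sufficiently small $a>0$. Two ingredients have to be checked in order to invoke it: log-concavity of $f_{n,\beta}$ and of $f_\beta$ for each $n$, and weak convergence $f_{n,\beta}\Rightarrow f_\beta$.

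For the log-concavity side, the $\beta$-Hermite ensemble corresponds to the convex potential $V(x)=x^2$ in \eqref{eq:betacoulombgas}, so Theorem~\ref{thm: continuous log-concave}(1) yields that the ordered joint density of all $n$ eigenvalues on the Weyl chamber is log-concave. Projecting to the coordinates of the $k$ smallest eigenvalues preserves log-concavity by the Pr\'ekopa--Leindler inequality, and the rescaling $\lambda_{\beta,\ell}\mapsto n^{1/6}\bigl(2\sqrt{n}-\lambda_{\beta,\ell}\bigr)$ is affine and invertible, hence $f_{n,\beta}$ is log-concave on $\R^k$. Log-concavity of $f_\beta$ is precisely Corollary~\ref{cor: Stoch Airy operator}(2), which already guarantees existence of a density for the joint law of the bottom $k$ eigenvalues of $\mathcal{H}_\beta$.

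For weak convergence, I would appeal directly to Theorem~1.1 of \cite{RRV06}, which states that the rescaled vector whose density is $f_{n,\beta}$ converges in distribution to $(\Lambda_0(\beta),\dots,\Lambda_{k-1}(\beta))$, the bottom $k$ eigenvalues of $\mathcal{H}_\beta$. Since the spectrum of $\mathcal{H}_\beta$ is almost surely simple and the law has a full-dimensional log-concave density, the limiting distribution is non-degenerate on $\R^k$. All hypotheses of \cite[Proposition~2]{CS10} are then satisfied, and applying it yields
\[
\sup_{x\in\R^k}e^{a\|x\|}\bigl|f_{n,\beta}(x)-f_\beta(x)\bigr|\to 0
\]
for every $a<a_0$, where $a_0>0$ is determined by the tail behaviour of $f_\beta$.

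The argument itself is essentially bookkeeping: the heavy lifting is done by the weak-limit theorem of \cite{RRV06} and by the Cule--Samworth uniform convergence theorem. The only point that requires a moment of care is confirming the non-degeneracy hypothesis needed by \cite[Proposition~2]{CS10} -- i.e., that the limiting law genuinely has $k$-dimensional support and no hidden affine degeneracies; this follows from the strict orderings $\Lambda_0(\beta)<\Lambda_1(\beta)<\cdots<\Lambda_{k-1}(\beta)$ holding almost surely together with the log-concave density produced by Corollary~\ref{cor: Stoch Airy operator}. This verification is the one substantive step, but it is immediate from the facts already in hand.
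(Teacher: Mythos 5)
Your proposal is correct and is essentially the paper's own (one-line) argument: cite \cite{RRV06} Theorem~1.1 for weak convergence, Theorem~\ref{thm: continuous log-concave}/Corollary~\ref{cor: Stoch Airy operator} for log-concavity of $f_{n,\beta}$ and $f_\beta$, and then invoke \cite[Proposition~2]{CS10} to upgrade to weighted uniform convergence. One small slip: the projection should be onto the $k$ \emph{largest} eigenvalues of the $\beta$-Hermite ensemble (which the decreasing affine map $\lambda\mapsto n^{1/6}(2\sqrt{n}-\lambda)$ sends to the soft-edge scaling limit and hence to the $k$ smallest eigenvalues of $\mathcal{H}_\beta$), not the $k$ smallest, but this is cosmetic and does not affect the argument.
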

    \item By Theorem \ref{thm: continuous log-concave} we have that the distributions of largest eigenvalues of Hermite and Laguerre $\beta$-ensembles (see \cite{LR10} for details), are log-concave for all $\beta>0$. The fluctuations of these eigenvalues are known to converge weakly to $TW_{\beta}$ (see Equation $1.3$ and $1.5$ of \cite{LR10}). By  \cite[Corollary $6$]{meckes2014equivalence}, Corollary \ref{cor: Stoch Airy operator} yields the following corollary.
\begin{corollary}\label{cor: conv of Her-Lag moments}
    For all $\beta>0$ and for all $k\in\mathbb{N}$, the $k$-th moments of the largest eigenvalues of Hermite and Laguerre ensembles converge weakly to the corresponding moments of $TW_{\beta}$. 
\end{corollary}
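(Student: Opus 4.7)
The plan is to reduce convergence of moments to the general Meckes--Meckes equivalence theorem \cite[Corollary 6]{meckes2014equivalence}, which asserts that within the class of log-concave probability measures on $\R$, weak convergence is equivalent to convergence in every Wasserstein distance $W_p$, and in particular implies convergence of all moments. Hence the only substantive point is to verify that every distribution appearing in the limit theorem is log-concave; once this is in hand, the corollary is immediate.

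For the Hermite $\beta$-ensemble one has $V(x)=x^2$, which is convex. For the Laguerre $\beta$-ensemble the potential $V(x)=\tfrac{x}{2}+\bigl(\tfrac{1}{\beta}-\tfrac{a+1}{2}\bigr)\log x$ is convex on $(0,\infty)$ provided $a\geq \tfrac{2}{\beta}-1$, since $-\log x$ is convex. In both cases Theorem \ref{thm: continuous log-concave}(2) applies and yields that the largest particle has a log-concave distribution on $\R$. Since log-concavity on $\R$ is preserved by affine transformations, the centered and rescaled random variables $\widetilde{\lambda}_{n,\beta}$ appearing in equations (1.3) and (1.5) of \cite{LR10} are still log-concave, and by Corollary \ref{cor: Stoch Airy operator} the weak limit $TW_\beta$ is itself log-concave.

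With log-concavity in place on both sides, \cite[Corollary 6]{meckes2014equivalence} upgrades the known weak convergence $\widetilde{\lambda}_{n,\beta} \Rightarrow TW_\beta$ to convergence in $W_p$ for every $p\geq 1$, and consequently to convergence of the $k$-th moment for every $k\in\NN$. The step that would a priori be the main obstacle, namely uniform integrability of $|\widetilde{\lambda}_{n,\beta}|^k$ in $n$, is exactly what log-concavity supplies for free through the uniform sub-exponential tails built into the Meckes--Meckes equivalence; this is why Theorem \ref{thm: continuous log-concave} is the decisive input and no further case-by-case tail analysis is required for individual values of $\beta$.
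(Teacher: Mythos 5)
Your proposal is essentially the paper's own argument: apply Theorem \ref{thm: continuous log-concave}(2) to get log-concavity of the largest eigenvalue in the Hermite and Laguerre $\beta$-ensembles, observe that affine normalization preserves log-concavity and that the weak limit $TW_\beta$ is log-concave by Corollary \ref{cor: Stoch Airy operator}, and then invoke \cite[Corollary 6]{meckes2014equivalence} to promote weak convergence within the class of log-concave laws to convergence of all moments. The one point where you are slightly more careful than the paper's two-line derivation is in recording the constraint $a\ge \tfrac{2}{\beta}-1$ needed for convexity of the Laguerre potential $V(x)=\tfrac{x}{2}+\bigl(\tfrac{1}{\beta}-\tfrac{a+1}{2}\bigr)\log x$; it would be worth adding one sentence noting that in the \cite{LR10} scaling regime this constraint holds for all sufficiently large $n$, so it does not obstruct the asymptotic statement. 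Other than that stylistic remark, the argument is correct and matches the paper.
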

The above result was known only for $\beta\geq 1$ (see Corollary $3$ of \cite{LR10}). Log-concavity could also have other applications. For example, the partial result of\ \ \   log-concavity of \cite{BLS17} was used in \cite{BCD23}.
    \item Tracy and Widom \cite{TW94} had also computed expressions for  ``higher-order Tracy-Widom laws'', which emerge as limiting distributions for the $k$-th largest eigenvalue of the GUE. These also exhibit universality; for example, Baik, Deift and Johansson \cite{BDJ00} showed that the length of the second row of a Young diagram under the Plancherel measure also converges (after centering and scaling) to the same second-order Tracy-Widom law. While the expressions for the higher-order laws are even less tractable, their log-concavity is an immediate consequence of our results. Moreover, the log-concavity and log-supermodularity of the smallest $k$ eigenvalues of stochastic Airy operator,
    which would possess Tracy-Widom laws of various orders as marginals, is also an automatic consequence.
\end{enumerate}

\begin{remark}
      In \cite{BLS17} a much more involved proof (the authors attribute the proof to P. Deift) is presented to show that $\mbox{TW}_2$ is log-concave on the positive half of the real line. That proof uses a different description of the $\mbox{TW}_2$ distribution in terms of the solutions to the  Painl\'{e}ve-II differential equation (this was in fact the original description given by Tracy and Widom). Although more involved, the technique is very different and has potential future uses. For example, {the method could be useful} in studying higher order analogues of $TW_2$ described in terms of solutions of higher order equations of the Painl\'{e}ve-II hierarchy (See \cite{DMS18}). Hence, for the sake of completeness, in  Appendix \ref{appendix} we present a modification of Deift's proof and  show the log-concavity of $\mbox{TW}_2$ on the whole of the real line.
\end{remark}

\begin{remark}
    A probability density $f$ on $\mathbb{R}^n$ is said to be strongly log-concave with parameter $\sigma^2$, if $f(x)/\phi_{\mu,\sigma^2}$ is log-concave function, where $\phi_{\mu,\sigma^2}$ is probability density of $N(\mu,\sigma^2 I_n)$ random vector. The arguments in the proof of Theorem \ref{thm: continuous log-concave} also give that the ordered points $x_{k}$ of $\beta$-Coulomb gases with $V(x)=x^2$ are strongly log-concave with parameter $(0,1/2\widehat{\beta})$ for any $\widehat{\beta}<\beta$. As strong log-concavity is preserved under the limit (with common parameters), one might hope for strong log-concavity of $\mbox{TW}_{\beta}$. But after appropriate scaling and shifting of $x_{n}$, the resulting random variables which converge to $TW_{\beta}$ are strongly log-concave with parameter $(-2, n^{1/3}/2\widehat{\beta})$. As there is no common parameter, the strong log-concavity in the limit is not guaranteed. In fact, $\PP(TW_{\beta}>x)\sim \exp(-2\beta x^{3/2}/3)$ as $x\rightarrow\infty$ (by \cite{RRV06}). Hence $TW_{\beta}$ cannot be strongly log-concave.
\end{remark}

    Another useful feature of log-concave distributions in the context of information theory is that one obtains bounds on a few important characteristics of distributions such as Shannon and R\'enyi entropies \cite{RE08}. For a random variable $X$ with density function $f$, the R\'enyi entropy of order $\alpha\in(0,\infty) \setminus\{1\}$, is defined as
\begin{align*}
    h_{\alpha}(X)=\frac{1}{1-\alpha}\log\left(\int f^{\alpha}(x)dx\right),
\end{align*}    
assuming the integral exists. For $\alpha\rightarrow 1$ one obtains the usual Shannon differential entropy $h(X)=-\int f\log f.$ It is well known that the entropy among all zero-mean random variables with the same second moment is maximized by the Gaussian distribution:
\begin{align*}
    h(X)\leq \log\left(\sqrt{2\pi e \Var(X)}\right).
\end{align*}

Although one cannot hope for a lower bound for entropy in general, it was shown in \cite{BM11} that in the class of log-concave random variables, the above inequality can be reversed. A recent result in \cite{MNR23} shows that, for any log-concave random variable $X$, we have the sharp inequality
\begin{align*}
    h(X)\geq \frac{1}{2}\log\left(\Var(X)\right)+1.
\end{align*}

The work of \cite[Theorem IV.$1$]{BM11} (cf. \cite{FMW16, FLM20}) and \cite[Corollary $1.2$]{MNR23} gives sharp lower bounds on the R\'enyi entropies for log-concave random variables in terms of maximum density and variance respectively. Using the fact that $TW_{\beta}$ are log-concave, these results can be used to obtain bounds on R\'enyi entropies and Shannon entropy of $TW_{\beta}$ distributions, provided one obtains bounds on the variance of these distributions. With variance bounds and log-concavity of $TW_{\beta}$ distributions, one can also obtain bounds on higher central moments, using the work of \cite[Proposition $1$]{MK17}. Although we are not aware of theoretical bounds on the moments of $TW_{\beta}$ distributions, there exist algorithms to compute the moments numerically \cite{TWN21}. 

\subsection{Log-concavity of \texorpdfstring{$\mathcal{A}_2$}  \ \ process}
We study log-concavity of $\mathcal{A}_2$  process ($\mbox{Airy}_2$ process) which is one of a central object in random matrix theory and last passage percolation. The $\mathcal{A}_2$  process was introduced by Pr\"{a}hofer and Spohn \cite{PS02} in the study of the scaling limit of a discrete polynuclear growth model.

Consider a collection of $N$ Brownian bridges $\left( B_1(t),\dots, B_N(t)\right)$, all starting from zero at time $t=0$ and ending at zero at time $t=1$, and conditioning them not to intersect in the region $t\in(0,1)$. We will always assume that the paths are ordered so that $B_1(t)<\hdots<B_N(t)$ for $t\in(0,1)$.
The relation between the $\mbox{Airy}_2$ process and non-intersecting Brownian bridges lies in the fact that, suitably rescaled, the top path of a collection of non-intersecting Brownian bridges converges to the $\mbox{Airy}_2$ process minus a parabola:
\begin{align}\label{eq: BM to Airy process}
    2N^{1/6}\left(B_N\left(\frac{1}{2}(1+n^{-1/3}t)\right)-\sqrt{n}\right)\rightarrow \mathcal{A}_2(t)-t^2
\end{align}
in the sense of convergence in distribution in the topology of uniform convergence on compact sets (See Equation $1.6$ of \cite{nR15}). This result is
well-known in the sense of convergence of finite-dimensional distributions; the stronger convergence stated here was
proved in \cite{CH14}. We prove the following theorem.

\begin{theorem}\label{thm: Airy_2 process}
    For any $k\geq 1$ and $t_1<\dots<t_k$, the joint distribution $\left(\mathcal{A}_2(t_1),\dots, \mathcal{A}_2(t_k)\right)$ is log-concave.
\end{theorem}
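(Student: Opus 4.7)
The plan is to use the Corwin--Hammond scaling limit \eqref{eq: BM to Airy process} together with a Gaussian-restricted-to-convex-set argument at the prelimit level of non-intersecting Brownian bridges (NI BBs). Log-concavity of a probability measure on $\mathbb{R}^k$ is preserved by affine transformations, deterministic shifts, and weak limits whose limit has full-dimensional support (in which case, by Borell's theorem, the limit has a log-concave density). Since $(\mathcal{A}_2(t_1),\dots,\mathcal{A}_2(t_k))$ differs from $(\mathcal{A}_2(t_1)-t_1^2,\dots,\mathcal{A}_2(t_k)-t_k^2)$ by a deterministic shift, \eqref{eq: BM to Airy process} reduces the theorem to showing: for each fixed $N\ge 1$ and any $0<\tau_1<\dots<\tau_k<1$, the joint distribution of $(B_N(\tau_1),\dots,B_N(\tau_k))$ is log-concave, where $B_1<\dots<B_N$ are $N$ NI BBs on $[0,1]$ from $\mathbf{0}$ to $\mathbf{0}$.

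\emph{Core argument.} It suffices to prove log-concavity of the full joint distribution of $(B_i(\tau_j))_{i\in[N],\,j\in[k]}$, since marginalizing onto the top path $(B_N(\tau_j))_j$ preserves log-concavity by the Pr\'ekopa--Leindler inequality. To deal with the degenerate endpoints, first approximate by $N$ independent BBs from $\varepsilon(1,2,\dots,N)$ to $\varepsilon(1,2,\dots,N)$ for small $\varepsilon>0$. For any finite partition $0=s_0<s_1<\dots<s_M=1$ refining $\{\tau_j\}_{j=1}^k$, the vector $(B_i(s_m))_{i\in[N],\,m\in[M-1]}$ is jointly Gaussian, hence log-concave on $\mathbb{R}^{N(M-1)}$. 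The ordering event
\[
E_M \;=\; \bigcap_{m=1}^{M-1}\{B_1(s_m)<B_2(s_m)<\dots<B_N(s_m)\}
\]
is an intersection of half-spaces, hence convex; conditioning the Gaussian on $E_M$ preserves log-concavity, and marginalizing out the coordinates at times $s_m\notin\{\tau_j\}$ again preserves it.

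\emph{Passing to the limits.} As $M\to\infty$ along refining grids, continuity of Brownian paths gives $E_M\downarrow E_\infty=\{B_1(s)<\dots<B_N(s),\;\forall s\in(0,1)\}$ almost surely, so the conditional joint law at $(\tau_j)_j$ converges weakly to that of $N$ NI BBs with endpoints $\varepsilon(1,2,\dots,N)$; the limit has full-dimensional support on the open Weyl chamber, so log-concavity is preserved. Letting $\varepsilon\to 0$ recovers the standard NI BBs from $\mathbf{0}$ by a Doob $h$-transform argument, and again log-concavity passes to the limit. Marginalizing to the top coordinate and applying the affine rescaling in \eqref{eq: BM to Airy process} with $N\to\infty$ yields log-concavity of $(\mathcal{A}_2(t_1)-t_1^2,\dots,\mathcal{A}_2(t_k)-t_k^2)$, completing the proof after adding back the deterministic shift.

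\emph{Main obstacle.} The delicate part is to justify each weak-limit step (grid refinement, $\varepsilon\to 0$, and the Corwin--Hammond limit $N\to\infty$) and in particular to ensure the limiting law is nondegenerate on $\mathbb{R}^k$ so that the preservation of log-concavity is meaningful. This hinges on known absolute continuity and full support properties of finite-dimensional distributions of both NI BB ensembles and the Airy$_2$ process; these inputs, together with the elementary fact that the conditioning event $E_M$ and its limit carry positive mass under the approximating measures, should make the limiting arguments routine.
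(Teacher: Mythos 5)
Your proposal is correct and follows essentially the same route as the paper: discretize the non-intersecting Brownian bridges in time, realize the conditioned finite-dimensional law as a Gaussian restricted to the convex ordering event (which preserves log-concavity), marginalize by Pr\'ekopa--Leindler, refine the mesh, and pass to the Airy$_2$ process via the Corwin--Hammond scaling limit. The only difference is your extra $\varepsilon$-perturbation of the endpoints, which the paper avoids because the joint law of independent bridges from $\mathbf{0}$ to $\mathbf{0}$ at finitely many \emph{interior} times is already a non-degenerate Gaussian, making the ordering event have positive probability without perturbation.
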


The definition of log-concave measures also extends to locally convex Hausdorff spaces \cite{Bor74,BM16}. Consider the distribution of the $\mathcal{A}_2$ process on the function space $C(\R)$ equipped with the  topology of uniform convergence on compact sets. Using \cite[Theorem $2.1$]{Bor74}, Theorem \ref{thm: Airy_2 process} implies that the (infinite-dimensional) distribution of the $\mathcal{A}_2$ process is a log-concave measure.

\begin{remark}
    It is known that the long time limit of $n$ spatial points in the solution of KPZ equation for the sharp wedge initial conditions are exactly the finite dimensional distributions of $\mbox{Airy}_2$ process \cite{PS11}. As a result we have that the finite dimensional distributions of KPZ solutions converge to a log-concave distribution. One could also study whether for a fixed time, the joint distribution of $n $ spatial point in KPZ solutions are log-concave. 
\end{remark}

 If one prefers the stationary process $\mathcal A_2(t)-t^2$, observe that its distribution is just a translation of the distribution of $\mathcal A_2$ on $C[0,1]$, hence it is also log-concave. As $\mathcal{A}_2(t)$ is distributed as $TW_2$ for any fixed $t$, this provides another proof for log-concavity of $TW_2$. By adaptation of the proof of Theorem \ref{thm: Airy_2 process}, it follows that Theorem \ref{thm: Airy_2 process} can be extended to finite dimensional distributions (and hence the process distribution) of any line from the Airy line ensemble \cite{CH14}. Since the initial version of our work appeared, Wu \cite{XW24} obtained log-concavity of finite dimensional distributions (and hence the process distribution) of more general line ensembles.
 
 As $\mathcal{A}_2(t)$ is an important object in modern probability, the observation of log-concavity of its finite distributions may have several implications. We remark one such result here. Let $B $ be a convex, open symmetric set in the state space of Airy-2 process and let $C$ be the scaling $C=(\frac{2}{a}-1)B$ where $0<a<1$, then 
 $$
 \PP(\mathcal{A}_2(\cdot) \notin B)\geq \PP(\mathcal{A}_2(\cdot) \notin C)^a.
 $$
 This follows from Theorem $3$ of Bobkov and Melbourne~\cite{BM15}.
 
 The proof of Theorem \ref{thm: Airy_2 process} involves restricting Gaussian density (which is log-concave) to an appropriate convex set, which preserves log-concavity. This idea is of wider applicability. To illustrate, we now prove the log-concavity of the Airy distribution.

Let $(B^{\mb{\tiny ex}}(t))_{t\in[0,1]}$ be the Brownian excursion. The Airy distribution is the distribution of the area under the Brownian excursion, i.e., of the random variable $A:= \int_{0}^{1}B^{\mb{\tiny ex}}(t)dt$. In the context of random interfaces, it is the distribution of maximal height of fluctuating interface in $(1+1)$ dimensional Edwards-Wilkinson model \cite{MC09}. It also shows up in combinatorics, in particular the limiting distribution of fluctuations/area of parking functions (Theorem $14$ of \cite{DH17}). B\'ona conjectured~\cite{Bona-conjecture} that the area of a uniform random parking functions has log-concave distribution. By Lemma \ref{lem: Discrete to continuous} it follows that for B\'ona's conjecture to be true, the limiting distribution, which is the Airy distribution has to be log-concave. The following theorem shows that this is indeed true. In fact, Mohan Ravichandran (personal communication) has proved B\'ona's conjecture for all $n$.

\begin{theorem}\label{thm: Airy distribution}
    Airy distribution is log-concave.
\end{theorem}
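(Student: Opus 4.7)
The plan is to realize the Airy distribution as the weak limit of log-concave distributions arising from Gaussian vectors restricted to convex sets, in direct analogy with the proof of Theorem \ref{thm: Airy_2 process}. The essential observation is that while the Brownian excursion itself is not a Gaussian process, it is (in an appropriate limiting sense) a Brownian bridge conditioned to stay positive, and positivity at a grid is a convex constraint.

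Concretely, for each $n \geq 2$, I would let $Y^{(n)} = (Y^{(n)}_1, \ldots, Y^{(n)}_{n-1})$ denote the Gaussian vector with the joint distribution of $(B^{\mathrm{br}}(k/n))_{k=1}^{n-1}$, where $B^{\mathrm{br}}$ is a standard Brownian bridge on $[0,1]$. Its density on $\mathbb{R}^{n-1}$ is a non-degenerate centered Gaussian, hence log-concave. The event $\mathcal{E}_n := \{Y^{(n)}_k > 0 \text{ for every } k\}$ is the intersection of half-spaces, a convex cone, so conditioning $Y^{(n)}$ on $\mathcal{E}_n$ multiplies the density by the indicator of a convex set, preserving log-concavity. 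The Riemann sum $A_n := \frac{1}{n}\sum_{k=1}^{n-1} Y^{(n)}_k$ is a linear functional of $Y^{(n)}$, so by the Pr\'ekopa--Leindler inequality its conditional law given $\mathcal{E}_n$ is log-concave on $\mathbb{R}$.

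To conclude, I would invoke the classical fact (in the circle of results around Kaigh and Bolthausen) that the law of $B^{\mathrm{br}}$ conditioned on the positivity event $\mathcal{E}_n$ converges weakly in $C[0,1]$ to the law of the Brownian excursion $B^{\mathrm{ex}}$ as $n \to \infty$. Since $f \mapsto \int_0^1 f(t)\,dt$ is continuous on $C[0,1]$ and $A_n$ is (up to an asymptotically negligible boundary term) the Riemann sum of $B^{\mathrm{br}}$ restricted to the grid, this gives $A_n \Rightarrow A$. Log-concavity on $\mathbb{R}$ is preserved under weak limits to non-degenerate distributions (the continuous analogue of Lemma \ref{lem: Discrete to continuous}), so $A$ has a log-concave distribution. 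The main technical step is justifying the weak convergence of the discretely conditioned bridge to the excursion, which requires combining convergence of finite-dimensional distributions (computable via explicit Gaussian density formulas at the grid) with a tightness argument near the endpoints $t=0,1$; this is where the bulk of the work would lie.
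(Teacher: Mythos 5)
Your proposal follows the same core strategy as the paper's proof: realize the Airy distribution as a weak limit of log-concave laws obtained by restricting a Gaussian density (the bridge at a finite grid) to a convex positivity cone, pushing forward through the linear Riemann-sum functional via Pr\'ekopa--Leindler, and invoking preservation of log-concavity under weak limits. The one structural difference is how the limit is organized, and it affects which convergence theorem you actually need. You condition and sum on the \emph{same} mesh $\{k/n\}_{k=1}^{n-1}$ and send $n\to\infty$ once; since the grid moves with $n$, this forces the full functional weak convergence $B^{\mathrm{br}}\,|\,\mathcal{E}_n \Rightarrow B^{\mathrm{ex}}$ in $C[0,1]$ (Kaigh/Bolthausen), which, as you correctly flag, carries a non-trivial tightness burden near $t=0,1$. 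The paper instead uses \emph{nested} dyadic grids: it conditions on positivity at $\{k/2^n\}$ but sums only over the coarser sub-grid $\{k/2^m\}$ (a subset of the conditioning grid once $n>m$), sending $n\to\infty$ first with $m$ held fixed. Since that inner limit concerns a fixed finite collection of time points, only finite-dimensional convergence of the conditioned bridge to the excursion is needed, and those marginals are explicit Gaussian density ratios with no tightness issue. The outer limit $m\to\infty$ is then just almost-sure convergence of Riemann sums of the continuous excursion path to $\int_0^1 B^{\mathrm{ex}}(t)\,dt$. So the two arguments are essentially the same, but the paper's double-limit arrangement sidesteps exactly the "bulk of the work" you identify as the remaining technical step.
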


The trick of conditioning log-concave density to a convex set can be extended to traceless Gaussian $\beta$-ensembles (see Section $2$ of \cite{Matsumoto08}). If we consider quadratic $V$ in $\beta$-Coulomb gases and restrict the density to the convex set $\mathcal{S}=\{x\in\mathcal{W}_n:\sum\limits_{i=1}^nx_i=0\}$, we obtain log-concavity of density of traceless Gaussian $\beta$-ensembles. In particular, we obtain log-concavity of largest eigenvalue of traceless GUE. The largest eigenvalue of a $k\times k$ traceless GUE is also the limiting distribution of the length of a longest weakly increasing subsequence of a random word from an ordered $k$ letter alphabet \cite{TW01}. One can ask whether log-concavity holds for each finite $k$ and $n$ (see Subsection \ref{subsec: open questions}). Traceless GUE is related to several other random word statistics \cite{KJ00, HM15}.

\section{Proofs of Theorem \ref{thm: Poisson Plancherel log-concave} and Theorem \ref{thm:gen weight}}\label{sec: discrete setting}

\begin{proof}[Proof of Theorem \ref{thm:gen weight}]

We first show that $\PP_{n,w,Q}$ is discrete log-concave on $\Z^n$. Note that for any $x,y \in \overrightarrow{\Z}^n $, we have $\left\lfloor \frac{x+y}{2}\right\rfloor, \left\lceil \frac{x+y}{2}\right\rceil \in \overrightarrow{\Z}^n$.

By the assumption \eqref{eq:log conc org}, we have that (See Remark \ref{rmk: org alt eq})
\begin{align*}
    w_i(x_i)w_i(y_i)\leq w_i\left(\left\lfloor \frac{x_i+y_i}{2}\right\rfloor\right)w_i\left(\left\lceil \frac{x_i+y_i}{2}\right\rceil\right).
\end{align*}

Hence in order to prove that $\PP_{n,w,Q}$ is discrete log-concave on $\Z^n$, it suffices to prove that for any $1\leq i<j\leq n$,
\begin{align}\label{eq: final for main thm}
    Q_{i,j}(x_j-x_i)Q(y_j-y_i)\leq Q_{i,j}\left(\left\lfloor \frac{x_j+y_j}{2}\right\rfloor-\left\lfloor \frac{x_i+y_i}{2}\right\rfloor\right)Q_{i,j}\left(\left\lceil \frac{x_j+y_j}{2}\right\rceil-\left\lceil \frac{x_i+y_i}{2}\right\rceil\right).
\end{align}

\textbf{Case 1:}
 If both $x_i+y_i$ and$\ x_j+y_j$ are  odd or both are even, we have 
\begin{align}\label{eq: both E or O}
    \left(\left\lfloor \frac{x_j+y_j}{2}\right\rfloor-\left\lfloor \frac{x_i+y_i}{2}\right\rfloor\right),\ \left(\left\lceil \frac{x_j+y_j}{2}\right\rceil-\left\lceil \frac{x_i+y_i}{2}\right\rceil\right)=\frac{(y_j-y_i)+(x_j-x_i)}{2}
\end{align}
As $Q_{i,j}$ is log-concave, $Q_{i,j}(a)Q_{i,j}(b)\leq Q_{i,j}^2\left(\frac{a+b}{2}\right)$ and \eqref{eq: final for main thm} follows from \eqref{eq: both E or O}.

\textbf{Case 2:} Now suppose $x_i+y_i$ is odd and $x_j+y_j$ is even, then
\begin{align}\label{eq: one O one E 1}
        \left\lfloor \frac{x_j+y_j}{2}\right\rfloor-\left\lfloor \frac{x_i+y_i}{2}\right\rfloor=\frac{(y_j-y_i)+(x_j-x_i)}{2}+\frac{1}{2}\\\label{eq: one O one E 2}
        \left\lceil \frac{x_j+y_j}{2}\right\rceil-\left\lceil \frac{x_i+y_i}{2}\right\rceil=\frac{(y_j-y_i)+(x_j-x_i)}{2}-\frac{1}{2}
\end{align}

Note that for $s,t$ and $u$ satisfying $s\leq s+u\leq t-u\leq t$, by log-concavity of $Q$, we have $Q(s)Q(t)\leq Q(s+u)Q(t-u)$. The said inequality might fail if $s=t$ and $u>0$. For that to happen we need $x_j-x_i=y_j-y_i$. One can check that for such $x_i,x_j,y_i,y_j$ we always have that parity of $x_i+y_i$ and $x_j+y_j$ match. Thus for Case $2$, we never have that $x_j-x_i=y_j-y_i$. Thus we have $Q(s)Q(t)\leq Q(s+u)Q(t-u)$. Using this inequality with \eqref{eq: one O one E 1} and \eqref{eq: one O one E 2} implies \eqref{eq: final for main thm}. Same argument can be used for the case when $x_i+y_i$ is even and $x_j+y_j$ is odd. 

Hence we have proved \eqref{eq: final for main thm} and hence also that $\PP_{n,w,Q}$ is discrete log-concave on $\Z^n$. 

The log-concavity of $h_i$ follows immediately from Corollary~\ref{cor:HKSmarginals}.
\end{proof}

\begin{remark}\label{rmk: org alt eq}
Although we use the condition that $\forall i,j\in\NN$
\begin{align}\label{eq:log conc alt}
    w(i)w(j)\leq w\left(\left\lfloor \frac{i+j}{2}\right\rfloor\right)w\left(\left\lceil \frac{i+j}{2}\right\rceil\right)
\end{align}
in the proof of Theorem \ref{thm:gen weight}, note that \eqref{eq:log conc alt} and \eqref{eq:log conc org} (with the no internal zeros) are equivalent. To see this, \eqref{eq:log conc org} implies that if $i\leq i+k\leq j-k\leq j$ then
\begin{align*}
w(i)w(j)\leq w(i+k)w(j-k),
\end{align*}
which gives us \eqref{eq:log conc alt}. Now for the other direction, taking $i=k-1$ and $j=k+1$ in \eqref{eq:log conc alt} gives us \eqref{eq:log conc org}.
\end{remark}

\begin{remark}\label{rmk: proof extension}
    Theorem \ref{thm:gen weight} can be extended to functions $Q_{i,j}(h_i,h_j)$ satisfying 
    \begin{align*}
        Q_{i,j}(h_i,h_j)Q_{i,j}(g_i,g_j)\leq Q_{i,j}\left(\left\lfloor \frac{h_i+g_i}{2}\right\rfloor, \left\lfloor \frac{h_j+g_j}{2}\right\rfloor\right)Q_{i,j}\left(\left\lceil \frac{h_i+g_i}{2}\right\rceil, \left\lceil \frac{h_j+g_j}{2}\right\rceil\right).
    \end{align*}
\end{remark}

\bprf[Proof of Theorem~\ref{thm:schurmeasures}] As in the proof of Theorem \ref{thm:gen weight}, we shall use Result~\ref{thm: Klartag result}. Writing
\begin{align*}
    \mathbb P_{a,b}(\lambda_i=k) = \frac{1}{Z_{a,b}}\sum_{\lambda:\lambda_i=k}s_{\lambda}(a)s_{\lambda}(b)
\end{align*}
we see that the  log-concavity of the distribution of $\lambda_i$ follows from Result~\ref{thm: Klartag result} if we could show that
\begin{align}\label{eq:requiredineqonschur}
    s_{\theta}(a)s_{\theta}(b)s_{\phi}(a)s_{\phi}(b)\le s_{\lambda}(a)s_{\lambda}(b)s_{\mu}(a)s_{\mu}(b)
\end{align}
where $\theta=\lfloor \frac{\lambda+\mu}{2}\rfloor$ and $\phi=\lceil \frac{\lambda+\mu}{2}\rceil$. Extending a conjecture of Okounkov \cite{okounkov1997}, it was proved by Lam, Postnikov and Pylyavskyy~\cite{lampostnikovpylyavskyy} that for  $\lambda,\mu,\theta,\phi$ related as above, 
\begin{align*}
s_{\theta}s_{\phi}\preceq    s_{\lambda}s_{\mu}
\end{align*}
where the inequality is in the sense of Schur positivity. That is, when  
$s_{\lambda}s_{\mu}-s_{\theta}s_{\phi}$ is expanded as a linear combination of Schur polynomials, the coefficients are all non-negative. Log-concavity of Schur polynomials has been used recently (see Section $4.4$ of \cite{DLM23} and Section $1.1$ of \cite{DLM24}) as a key ingredient in large deviation results. 

When a Schur polynomial is evaluated at $x=(x_1,x_2,\ldots )$ with $x_i\ge 0$, the result is non-negative (as clear from the definition $s_{\lambda}(x)=\sum_{T}x^T$, where the sum is over semistandard Young Tableaux $T$ of shape $\lambda$). Therefore, if $a_i\ge 0$ and $b_i\ge 0$, then 
\begin{align*}
s_{\theta}(a)s_{\phi}(a)\le    s_{\lambda}(a)s_{\mu}(a) \qquad \mbox{ and } \qquad s_{\theta}(b)s_{\phi}(b)\le    s_{\lambda}(b)s_{\mu}(b).
\end{align*}
Clearly \eqref{eq:requiredineqonschur} follows from this and the proof is complete.
\eprf

We now proceed with the proof of Theorem \ref{thm: Poisson Plancherel log-concave}.\\

There is a natural bijection from $h=(h_1,h_2,\dots,h_n)$ with $0\leq h_1< h_2<\dots<h_n$ to $\lambda$ with $\ell(\lambda)\leq n$, which is $\lambda_i=h_{n+1-i}-(n-i)$. Consider the discrete measure in \eqref{eq: DOPE} on $\overrightarrow{\NN}^n$ with $Q_{i,,j}(x)=Q(x)=x^{\beta}$ and $w_i(x)=w(x)=q^x$, where $0<q<1$. By the above bijection, such a measure on $\overrightarrow{\NN}^n$ induces a probability measure on $\Lambda$, say ${\gamma_{n,q,\beta}}$. 

\begin{theorem}\label{thm: Meixner limit to Plancherel}
   For $\alpha,\beta>0$, we have $\gamma_{n,\alpha/n^{\beta},\beta}$ converges in distribution to $M^{(\alpha,\beta)}$, as $n\rightarrow\infty$.
\end{theorem}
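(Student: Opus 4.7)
The approach is to rewrite $\gamma_{n,\alpha/n^\beta,\beta}(\lambda)$ using the Frobenius (hook-content) formula and reduce the comparison with $M^{(\alpha,\beta)}(\lambda)$ to a pointwise calculation together with a convergence-of-normalizations step.

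First, under the bijection $h_j=\lambda_{n+1-j}+j-1$ between $\overrightarrow{\NN}^n$ and partitions with at most $n$ parts, one has $\sum_j h_j=|\lambda|+\binom{n}{2}$ and, by the Frobenius/hook-length formula, $d_\lambda/|\lambda|!=\prod_{i<j}(h_j-h_i)/\prod_j h_j!$. Substituting into the definition of $\gamma_{n,q,\beta}$,
\[ \gamma_{n,q,\beta}(\lambda)=\frac{q^{\binom{n}{2}}}{Z_n}\,q^{|\lambda|}\left(\frac{d_\lambda}{|\lambda|!}\right)^{\beta}\left(\prod_{j=1}^n h_j!\right)^{\beta}. \]
Writing $\prod_j h_j!=\prod_{j=0}^{n-1}j!\cdot\prod_{i=1}^{\ell(\lambda)}\prod_{s=1}^{\lambda_i}(n-i+s)$ and using $\prod_{s=1}^{\lambda_i}(n-i+s)=n^{\lambda_i}(1+O(1/n))$ for fixed $\lambda_i$, we obtain, with $A_n:=\prod_{j=0}^{n-1}j!$, the identity $\prod_j h_j!=A_n\,n^{|\lambda|}\,R_n(\lambda)$, where $R_n(\lambda):=\prod_{(a,b)\in\lambda}(1+(b-a)/n)$ is the product over cells of the Young diagram ($a$ the row index and $b$ the column index). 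Setting $q=\alpha/n^\beta$, the factor $n^{\beta|\lambda|}$ cancels against $q^{|\lambda|}$, and we arrive at the key identity
\[ \gamma_{n,\alpha/n^\beta,\beta}(\lambda)=C_n\,M^{(\alpha,\beta)}(\lambda)\,R_n(\lambda)^{\beta},\qquad C_n:=\frac{q^{\binom{n}{2}}A_n^\beta Z_M}{Z_n}, \]
where $Z_M:=\sum_\lambda\alpha^{|\lambda|}(d_\lambda/|\lambda|!)^\beta$ is the normalizer of $M^{(\alpha,\beta)}$. For each fixed $\lambda$ one has $R_n(\lambda)\to 1$, so $\gamma_n(\lambda)=C_n\,M^{(\alpha,\beta)}(\lambda)\,(1+o_n(1))$ pointwise.

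It remains to show $C_n\to 1$. Summing the pointwise identity over $\lambda$ with $\ell(\lambda)\le n$ and using $\sum_\lambda\gamma_n(\lambda)=1$ gives $1=C_n\sum_{\lambda:\ell(\lambda)\le n}M^{(\alpha,\beta)}(\lambda)\,R_n(\lambda)^\beta$; restricting to the finite set $\{|\lambda|\le K\}$ (on which $R_n^\beta\to 1$ uniformly) and letting $K\to\infty$, one obtains $\limsup_n C_n\le 1$. The matching lower bound $\liminf_n C_n\ge 1$ reduces to tightness of $\{\gamma_n\}$ on $\Lambda$, i.e.\ $\sup_n\gamma_n(|\lambda|>K)\to 0$ as $K\to\infty$. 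I expect this tightness step to be the main technical obstacle. For $\beta=2$ it follows from the Cauchy identity $\sum_{\lambda:\ell\le n} s_\lambda(1^n)^2 q^{|\lambda|}=(1-q)^{-n^2}$, which implies that $|\lambda|$ under $\gamma_{n,\alpha/n^2,2}$ is Negative Binomial$(n^2,\alpha/n^2)$, converging to Poisson$(\alpha)$ and hence manifestly tight; for general $\beta>0$ one invokes Jack-polynomial analogues of the Cauchy identity, or gives a direct tail estimate exploiting $s_\lambda(1^n)/n^{|\lambda|}=(d_\lambda/|\lambda|!)R_n(\lambda)$. Once tightness is established, combining it with the pointwise identity yields $C_n\to 1$, hence $\gamma_n(\lambda)\to M^{(\alpha,\beta)}(\lambda)$ pointwise, and Scheff\'e's lemma upgrades this to convergence in total variation, in particular in distribution.
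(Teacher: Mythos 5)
Your proposal is essentially the same approach as the paper's proof, just packaged more transparently. Both rest on the Frobenius determinant formula and the bijection $h_j=\lambda_{n+1-j}+j-1$; both reduce to a pointwise computation (for fixed $\lambda$, the ratio $\gamma_n(\lambda)/M^{(\alpha,\beta)}(\lambda)$ tends to a constant $C_n$ since the correction factor $R_n(\lambda)^\beta\to 1$) plus control of the normalization. The paper organizes this as two claims---(i) the conditional law given $|\lambda|=k$ tends to $\mu_k^{(\beta)}$, and (ii) the consecutive ratios of $\gamma_n(|\lambda|=k)$ tend to those of $\nu_{\alpha,\beta}$---whereas you express it as a single pointwise identity $\gamma_n(\lambda)=C_n\,M^{(\alpha,\beta)}(\lambda)\,R_n(\lambda)^\beta$ and then argue $C_n\to 1$. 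These are mathematically equivalent. One thing worth noting is that you are explicit that the pointwise statement alone does not suffice: some form of tightness (or uniform integrability of $R_n(\lambda)^\beta$ against $M^{(\alpha,\beta)}$) is needed to prevent mass from escaping to infinity, and you correctly identify this as the nontrivial step. The paper's proof silently has the same lacuna---convergence of the consecutive ratios of the marginal of $|\lambda|$ does not by itself imply convergence of the marginal (one can construct counterexamples where the ratios converge but a fixed fraction of mass escapes), so an appeal to tightness is implicitly needed there too. Your $\beta=2$ argument via the Cauchy identity and the Negative Binomial law of $|\lambda|$ is complete; for general $\beta$ you only gesture at Jack-polynomial analogues or direct tail estimates, so strictly speaking a gap remains for $\beta\ne 2$, but the same is true of the paper's ``it is a straightforward computation to check.'' A direct tail estimate does work: using $\max_{\lambda\vdash k}d_\lambda\le\sqrt{k!}$, $|\Lambda_k|\le e^{C\sqrt k}$, and the crude bound $R_n(\lambda)\le(1+(\lambda_1-1)/n)^{|\lambda|}$ one can show $\sum_{|\lambda|>K}M^{(\alpha,\beta)}(\lambda)R_n(\lambda)^\beta$ is small uniformly in $n$, which closes the argument; you might add this to make the proposal self-contained. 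Your closing observation that Scheff\'e's lemma upgrades pointwise convergence of p.m.f.'s to total-variation convergence is a small but nice addition not present in the paper.
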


  Note that for $\beta=2$, Theorem \ref{thm: Meixner limit to Plancherel} is exactly the result, due to Johansson, that the limit of Meixner ensemble is Poissonized Plancherel measure. See Theorem $1.1$ of \cite{KJ02}. By Theorem \ref{thm:gen weight}, we have that $\forall i\in\N$, the distribution of $\lambda_i$ under the probability measure $\gamma_{n,q,\beta}$ is log-concave. Using Theorem \ref{thm: Meixner limit to Plancherel}, Theorem \ref{thm: Poisson Plancherel log-concave} is immediate.

In the proof of Theorem \ref{thm: Meixner limit to Plancherel}, we make use of the following formula, due to Frobenius determinant formula, for $d_{\lambda}$. If $\lambda\vdash k=(\lambda_1,\lambda_2,\dots,\lambda_\ell)$, then
\begin{align}\label{eq: Frobenius det formula}
    d_{\lambda}=\frac{k!\Delta(\lambda_{\ell},\lambda_{\ell-1}+1,\dots,\lambda_{1}+\ell-1)}{\lambda_{\ell}!(\lambda_{\ell-1}+1)!\dots(\lambda_{1}+\ell-1)!}.
\end{align}

\begin{proof}[Proof of Theorem \ref{thm: Meixner limit to Plancherel}]

 We will first show that, as $n\rightarrow\infty$, we have convergence of $R_{n,k}^{(\beta)}$ (as defined after \eqref{eq: rho definition}) to $\mu_{k}^{(\beta)}$. We then show that as $n\rightarrow\infty$, 
\begin{align}\label{eq: limiting mixture dist}
    \frac{\gamma_{n,\alpha/n^{\beta},\beta}(\sum \lambda_i=k+1)}{\gamma_{n,\alpha/n^{\beta},\beta}(\sum \lambda_i=k)}{\longrightarrow}\alpha\frac{\sum\limits_{\lambda\vdash k+1}(d_{\lambda}/(k+1)!)^{\beta}}{\sum\limits_{\lambda\vdash k}(d_{\lambda}/k!)^{\beta}}.
\end{align}

Note that to prove Theorem \ref{thm: Meixner limit to Plancherel}, it suffices to prove the above two claims. We now show that $R_{n,k}^{(\beta)}$ converges to $\mu_k^{(\beta)}$.

If $\lambda\vdash k=(\lambda_1,\lambda_2,\dots,\lambda_{\ell})$ which is mapped to $(h_1,h_2,\dots,h_n)$, one can check that
\begin{align}\label{eq: Plancherel Det formula for lambda}
    \prod\limits_{1\leq i<j\leq n}(h_j-h_i)^{\beta}= \prod\limits_{1\leq i<j\leq n-k}(h_j-h_i)^{\beta} (h_n!h_{n-1}!\dots h_{n-k+1}!)^{\beta}(d_{\lambda}/k!)^{\beta}.
\end{align}

Let $\lambda\vdash k$ and $\widehat{\lambda}\vdash k$ be two different partitions which are mapped to $h,\widehat{h}\in\overrightarrow{\NN}^n$. Note that this implies $\sum\limits_{i=n-k+1}^{n}h_i=\sum\limits_{i=n-k+1}^{n}\widehat{h}_i$. Then as $n\rightarrow\infty$,
\begin{align}\label{eq: limit to 1}
    \frac{\prod\limits_{1\leq i<j\leq n-k}(h_j-h_i)^{\beta} (h_n!h_{n-1}!\dots h_{n-k+1}!)^{\beta}}{\prod\limits_{1\leq i<j\leq n-k}(\widehat{h}_j-\widehat{h}_i)^{\beta} (\widehat{h}_n!\widehat{h}_{n-1}!\dots \widehat{h}_{n-k+1}!)^{\beta}}\rightarrow 1.
\end{align}
\eqref{eq: Frobenius det formula}, \eqref{eq: Plancherel Det formula for lambda}, \eqref{eq: limit to 1} together imply that $R_{n,k}^{(\beta)}$ converges to $\mu_k^{(\beta)}$. Now we prove \eqref{eq: limiting mixture dist}. 
\begin{align*}
    \gamma_{n,\alpha/n^{\beta},\beta}\left(\sum \lambda_i=k+1\right)\sim \sum\limits_{ \sum_{i}h_i=k+1+\frac{n(n-1)}{2}}\prod\limits_{i<j}(h_j-h_i)^{\beta}\left(\frac{\alpha}{n^{\beta}}\right)^{k+1+\frac{n(n-1)}{2}}.
\end{align*}

\begin{align}\label{eq: Plancherel limit}
    \lim_{n\rightarrow\infty}\frac{\gamma_{n,\alpha/n^{\beta},\beta}\left(\sum \lambda_i=k+1\right)}{\gamma_{n,\alpha/n^{\beta},\beta}\left(\sum \lambda_i=k\right)}=\lim_{n\rightarrow\infty}\frac{\alpha}{n^{\beta}}\frac{\sum\prod\limits_{i<j}(h_j-h_i)^{\beta}\ind_{\sum_{i}h_i=k+1+\frac{n(n-1)}{2}}}{\sum\prod\limits_{i<j}(h_j-h_i)^{\beta}\ind_{\sum_{i}h_i=k+\frac{n(n-1)}{2}}}.
\end{align}

Now we use \eqref{eq: Plancherel Det formula for lambda} to alternatively write each summand in both numerator and denominator of limit on the RHS of \eqref{eq: Plancherel limit}. Using Stirling's approximation it is a straight forward computation to check that \eqref{eq: limiting mixture dist} is true. This completes the proof of Theorem \ref{thm: Meixner limit to Plancherel}.    
\end{proof}

\section{Proofs of Theorem \ref{thm: Chen conjecture partial proof} and Theorem \ref{thm: Poissonization}}

\begin{proof}[Proof of Lemma \ref{lem: Discrete to continuous}]
    Suppose that, for the sake of contradiction, $f$ is not log-concave. Then there exists $x,y\in\R$ such that $f(x)f(y)>f^2(\frac{x+y}{2})$. Let $\mu_f$ be the probability measure corresponding to the density function $f$. Then $\frac{\mu_f(x-\varepsilon,x+\varepsilon)}{2\varepsilon}\rightarrow f(x)$. Choose $\varepsilon$ small enough so that,
    \begin{align*}
        {\mu_f(x-\varepsilon,x+\varepsilon)}{\mu_f(y-\varepsilon,y+\varepsilon)}>\left({\mu_f\left(\frac{x+y}{2}-\varepsilon-\varepsilon^2,\frac{x+y}{2}+\varepsilon+\varepsilon^2\right)}\right)^2.
    \end{align*}

As $\PP\left(\frac{X_n-a_n}{b_n}\in(x-\varepsilon,x+\varepsilon)\right)\rightarrow\mu_f(x-\varepsilon,x+\varepsilon)$, applying  Result \ref{thm: Klartag result} as 1-D discrete Brunn-Minkowski inequality, gives us the contradiction. Hence $f$ is log-concave.    
\end{proof}

\begin{proof}[Proof of Theorem \ref{thm: Chen conjecture partial proof}]
    Fix $j\in\NN$. We have that $\mu_n^{(2)}(\lambda)=\frac{d_{\lambda}^2}{n!}$. It is a simple calculation to check that, using \eqref{eq: Frobenius det formula}, for $k\in\{n-j,\dots,n\}$,

    \begin{align*}
       \lim_{n\rightarrow\infty} \frac{\mu_n^{(2)}(\lambda_1=k-1)\mu_n^{(2)}(\lambda_1=k+1)}{(\mu_n^{(2)}(\lambda_1=k))^2}=&\lim_{n\rightarrow\infty}\frac{\left(\sum\limits_{\lambda\vdash n-(k-1)}d_{\lambda}^2\right)\left(\sum\limits_{\lambda\vdash n-(k+1)}d_{\lambda}^2\right)}{\left(\sum\limits_{\lambda\vdash n-k}d_{\lambda}^2\right)^2}\\ 
       &\times \frac{(n-k)!^4}{(n-k-1)!^2(n-k+1)!^2}
    \end{align*}
This implies \eqref{eq: Chen conj result}.    
\end{proof}

\begin{proof}[Proof of Theorem \ref{thm: Poissonization}]
    In order to prove log-concavity of $Y$, we have to prove for any $k\geq 2$,
    \begin{align}\label{eq: poisson ineq}
       \left(\sum\limits_{i\geq 0}e^{-\lambda}\frac{\lambda^i}{i!}\mu_{i}(k-1)\right)
       \left(\sum\limits_{i\geq 0}e^{-\lambda}\frac{\lambda^i}{i!}\mu_{i}(k+1)\right)\leq 
       \left(\sum\limits_{i\geq 0}e^{-\lambda}\frac{\lambda^i}{i!}\mu_{i}(k)\right)^2. 
    \end{align}

    We define the functions $f,g,h=k$ as we did in the proof of Theorem \ref{thm:gen weight}.
    \begin{align*}
        h(x)=k(x)= e^{-\lambda}\frac{\lambda^{x}}{x!}\mu_{x}(k)\\ 
        f(x)= e^{-\lambda}\frac{\lambda^{x}}{x!}\mu_{x}(k-1)\\ 
        g(x)= e^{-\lambda}\frac{\lambda^{x}}{x!}\mu_{x}(k+1)
    \end{align*}

Using assumption \eqref{eq: poisson condition} we have that for any $i,j\geq 0$
\begin{align*}
    f(i)g(j)\leq h\left(\left\lfloor\frac{i+j}{2}\right\rfloor\right)k\left(\left\lceil\frac{i+j}{2}\right\rceil\right).
\end{align*}

This verifies the condition \eqref{eq:Klartag conditions} for the above defined functions $f,g,h=k$ when $n=1$. Applying Result \ref{thm: Klartag result}, we get that \eqref{eq: poisson ineq} is true. This completes the proof of log-concavity of $Y$.
\end{proof}

\section{Proofs of Theorem \ref{thm: Airy_2 process} and Theorem \ref{thm: Airy distribution}}
\begin{proof}[Proof of Theorem \ref{thm: Airy_2 process}]
    We use the fact that for any $N$, we can obtain $\left( B_1(t),\dots, B_N(t)\right)$ by conditioning a collection of $N$ independent Brownian bridges sequentially. Let $\left( W_1(t),\dots, W_N(t)\right)$ be a collection of independent Brownian bridges with all starting and ending at zero at times $0$ and $1$ respectively. For any $t_{j,1}<\dots<t_{j,j}$, the joint distribution 
\begin{align*}
\left( W_1(t_{j,1}),\dots, W_N(t_{j,1}),W_1(t_{j,2}),\dots, W_N(t_{j,2}),W_1(t_{j,j}),\dots, W_N(t_{j,j})\right)    
\end{align*}
is log-concave as it is a Gaussian vector. Now conditioning on the event 
\begin{align*}
    E_j= \{W_1(t_{j,i})<\dots<W_N(t_{j,i}),\ \forall i\in[j]\},
\end{align*}
is just restricting the Gaussian density to the convex set, 
\begin{align*}
\{x\in \mathbb{R}^{jN}: x_{i,N+1}<\dots<x_{i,N+n}, \forall i\in\{0,1,\dots,j-1\}\}\end{align*}
 on which log-concavity of the joint distribution would still hold. Hence conditional on $E_j$, the joint distribution $(W_N(t_{j,1}),\dots,W_N(t_{j,j}))$ is log-concave (Pr\'ekopa-Leindler inequality). Note that  
 \begin{align*}
     \left( W_1(t),\dots, W_N(t)\right)\text{conditioned on $E_j$} \rightarrow \left( B_1(t),\dots, B_N(t)\right) \text{conditioned on non-intersection}
 \end{align*}
  with the mesh $t_{j,1}<\dots<t_{j,j}$ converging to $(0,1)$ as $j\rightarrow\infty$. Also for any given $t_1<\dots<t_k$, one can choose a mesh converging to $(0,1)$ which contain $t_1,\dots,t_k$ at all times. Using Pr\'ekopa-Leindler inequality on the appropriate marginals, we obtain that $(B_N(t_1),\dots,B_N(t_k))$ is log-concave. By \eqref{eq: BM to Airy process} and preservation of log-concavity under translation, we have that \newline
  $\left(\mathcal{A}_2(t_1),\dots, \mathcal{A}_2(t_k)\right)$ is log-concave.
\end{proof}

\begin{proof}[Proof of Theorem \ref{thm: Airy distribution}]

Let $\{X_t\}_{t\in[0,1]}$ be a Brownian bridge. For each $n\in\N$, the joint distribution \newline 
$(X_{1/2^n}, X_{2/2^n}, \dots, X_{1 - 1/2^n})$ has log-concave density, as $X_t$ is a Gaussian process. Let $X_t^{(2^n)}$ be the process after conditioning on the event
\begin{align}
    S_{2^n}=\left\{\min\limits_{k\in \{1/2^n, 2/2^n,\dots, 1-1/2^n\}}X_k>0\right\}.
\end{align}

As restriction of log-concave density to a convex set is log-concave, the joint distribution $(X_{1/2^n}^{(2^n)}$, $ X_{2/2^n}^{(2^n)},$ $ \dots, X_{1 - 1/2^n}^{(2^n)})$ has log-concave density. As the class of log-concave random vectors is closed under linear transformations, using Pr\'ekopa-Leindler inequality, for any $A\subset[2^n-1]$, we have that   $\sum\limits_{k\in A}X_{k/2^n}^{(2^n)}$ is log-concave random variable. As $X_t^{(2^n)}$ converges weakly to $B_t$, for any $m\in\N$, $\sum\limits_{k=1}^{2^m-1}X_{k/2^m}^{(2^n)}/2^m$ converges to $\sum\limits_{k=1}^{2^m-1}B^{ex}({k/2^m})/2^m$ weakly as $n\rightarrow\infty$. This implies $\sum\limits_{k=1}^{2^m-1}B^{ex}({k/2^m})/2^m$ is log-concave. By letting $m\rightarrow\infty$, we have that $A$ is log-concave random variable.
\end{proof}

\section{Additional remarks and open questions}\label{subsec: open questions}

For the rest of the section, we discuss a few open questions extending the results mentioned above for various ensembles.  

\vspace{0.5cm}
\textbf{Open questions:}
\begin{enumerate}[(i)]
    \item Let $\rho_{n,k}^{(\beta)}$ be the probability measure on $h\in\overrightarrow{\NN}^n$ defined such that,
    \begin{align}\label{eq: rho definition}
    \rho_{n,k}^{(\beta)}(h=(h_1<h_2<\dots<h_n))\sim\prod\limits_{1\leq i<j\leq n}(h_j-h_i)^{\beta}\ind_{\sum h_i=k+\frac{n(n-1)}{2}}.
    \end{align}

    $\rho_{n,k}^{(\beta)}$ induces a probability measure on $\Lambda_k$, say $R_{n,k}^{(\beta)}$, due to the natural bijection for $n>k$. We explain this bijection for $n=4$ and $k=3$. For $\sum h_i=k+\frac{n(n-1)}{2}$, we need to move some $h_i$s to right from their initial locations at $i-1$. Suppose $0,1,3,5$ are the locations of $h_i$s, then $h_3,h_4$ were moved $1$ and $2$ places to the right of their initial locations. We hence map it to the partition $\lambda=(2,1)$. 

    Note that $\rho_{n,k}^{(2)}$ is exactly $\mathbb{P}_{n,n,\mbox{Me}}$ conditioned on $\sum h_i=k+\frac{n(n-1)}{2}$. It can be shown that $R_{n,k}^{(2)}$ converges to $\mu_k^{(2)}$ as $n\rightarrow\infty$ (see first claim in the proof of Theorem \ref{thm: Meixner limit to Plancherel}). Thus \eqref{eq: RSK chen conjecture} follows, which is equivalent to Conjecture \ref{conj: Chen conjecture}, if for $n>k$,
    \begin{align}\label{conj: stronger chen conjecture}
        \rho_{n,k}^{(2)}(h_n=j-1)\rho_{n,k}^{(2)}(h_n=j+1)\leq \rho_{n,k}^{(2)}(h_n=j)^2
    \end{align}
    holds. Note that \eqref{conj: stronger chen conjecture} is a generalization of Chen's conjecture and is checked to be true for small $n,k$.
    \vspace{0.2cm}
    \item  Also given that Theorem \ref{thm: Poisson Plancherel log-concave} holds for all $\lambda_i$, it would be interesting to know whether the distribution of $\lambda_2, \lambda_3,\dots$ are also log-concave under the Plancherel measure $\mu_n^{(2)}.$ It would also be interesting to know if the distribution of the sum of first few rows is log-concave.
   \vspace{0.05cm}    
    \item Another combinatorial object related to discrete ensembles is random words. Denote $\ell_{m,n}$ to be the length of longest weakly increasing subsequence of a word of length $n$ chosen uniformly random from ordered alphabet $\{1,2,\dots,m\}$. It is known that if $n\sim \mbox{Poi}(\alpha)$ then $\ell_{m,n}$ has the same distribution as $\mathbb{P}_{m,\alpha,\mbox{Ch}}(h_m)$ up to a shift (Proposition $1.5$ of \cite{KJ02}). Hence under Poissonization the distribution of $\ell_{w,m,n}$ is log-concave. Also $\ell_{w,m,n}$ is also distributed as $h_m$ with $\mathbb{P}_{m,\alpha,\mbox{Ch}}$ conditioned on $\sum h_i=n+\frac{m(m-1)}{2}$. Thus as before one could consider whether for fixed $m$ and $n$ the below inequality holds for all $i$, 
    \begin{align}\label{eq: random word chen conjecture}
        \PP(\ell_{m,n}=i-1)\PP(\ell_{m,n}=i+1)\leq \PP(\ell_{m,n}=i)^2.
    \end{align}
    Note that \eqref{eq: random word chen conjecture} is a random word variant of Chen's conjecture and is checked to be true for $1\leq m,n\leq 10$.
    \vspace{0.1cm}
    
        \item Similar questions could be asked for Krawtchouk ensemble, which is related to zig-zag paths in random domino tilings of Aztec diamond (see \cite{KJ02, EKLP}
    ) and for Hahn ensembles, which is related to random tilings of a hexagon (see  \cite{KJ02, CLP}).
    \vspace{0.1cm}
    \item  A problem similar to longest increasing subsequence, but of which very little is known is the length of longest common subsequence between two random words  of ordered alphabet which are of same length. Similar to Conjecture \ref{conj: Chen conjecture}, we could also ask whether length of longest common subsequence has log-concave distribution. Our simulations, for binary words show that this is indeed true for small $n$. One could also consider similar question for length of common subsequence between pairs of random permutations of $[n]$. The limiting distribution of fluctuations is known to be $TW_2$ \cite{HI23}. 
    \vspace{0.1cm}
    \item As remarked earlier, the log-concavity of exponential last passage time follows can be shown using Theorem \ref{thm:schurmeasures}. Consider the location of final point in the point to line passage time, which is the obtained from taking geometric limit to exponentials in $G_{\mathbf{1},\mathbf{n}}^{(1)}$. Although our methods cannot prove it, from simulations it is found that the location of this final point also has log-concave distribution on the line $x+y=2n$. It would be interesting to know if this is true.  It would also be interesting to know if log-concavity of last passage times could be proven for by some other general method which would also work for models which do not fall in to integrable systems (weights other than geometric and exponential).
    \vspace{0.1cm}

    \item We finally consider $TW_{\beta}$ distributions. For a positive integer $r$, a measurable function $f:\R\rightarrow\R$ is called P\'olya frequency function of order $r$, written as $\mbox{PF}_r$, if $\det \l(p(x_i-y_j)\r)_{i,j=1}^{m}\geq 0$ for all choices of $x_1<x_2\dots<x_m$ and $y_1<y_2\dots<y_m$ for all $1\leq m\leq r$ (the matrix $[p(x_i-y_j)]_{1\leq i,j\leq n}$ is totally positive). A function is $\mbox{PF}_2$ if and only if $f$ is log-concave (see \cite{SW14}). Thus by Corollary \ref{cor: Stoch Airy operator} we have that $TW_{\beta}$ densities are $\mbox{PF}_2$. $PF_{\infty}$ probability density functions (functions which are $PF_{r}$ for all $r\geq 0$) can be characterised as density functions of a linear combination of independent exponentials up to an independent Gaussian difference (see Theorem $2.4$ of \cite{BGKP22}). It follows easily that such measures have $\PP\l(X\geq t\r)\geq \exp(-ct)$ for some $c>0$ and all large $t$. But the tails of $TW_{\beta}$ are of the order $\exp(-c_{\beta}t^{3/2})$ \cite{RRV06}. Hence it follows that $TW_{\beta}$ cannot be $PF_{\infty}$. It is a natural question as to what is the largest $r$ such that $TW_{\beta}$ are $PF_{r}$? 

    \item One can also consider whether the following holds for all $\beta\geq 1$, for all $k\leq n-1$ and for large enough $n$.
    \begin{align}\label{eq: RSK beta chen conjecture}
\mu_n^{(\beta)}(\lambda_1=k-1)\mu_n^{(\beta)}(\lambda_1=k+1)\leq (\mu_n^{(\beta)}(\lambda_1=k))^2.    
\end{align}
Note that for $\beta=2$ this is Conjecture \ref{conj: Chen conjecture}. The corresponding inequality for $\beta=1$ is equivalent to the B\'ona-Lackner-Sagan conjecture on involutions \cite[Conjecture $1.2$]{BLS17}. We observe here that the above inequality does not hold for $\beta<1$ for large enough $n$. Using \eqref{eq: beta plancherel measure} and hook length formula to compute $d_{\lambda},$ we see that 
\begin{align*}
    \frac{\mu_n^{(\beta)}(\lambda_1=n-2)\mu_n^{(\beta)}(\lambda_1=n)}{\l(\mu_n^{(\beta)}(\lambda_1=n-1)\r)^2}=\frac{(\frac{n-3}{2})^{\beta}+(\frac{n-2}{2})^{\beta}}{(n-1)^{\beta}}.
\end{align*}   
For any $\beta<1$, the above ratio is greater than $1$ for large enough $n$. Hence \eqref{eq: RSK beta chen conjecture} fails for any $\beta<1$, for large enough $n$ . 
\end{enumerate}

\textbf{Acknowledgements: }  The authors would like to thank Mohan Ravichandran for raising the question of log-concavity of Airy distribution and explaining its occurrence in the study of random parking functions. The authors would also like to thank Milind Hegde for pointing out Okounkov's conjecture and its connection to log-concavity of Schur measures. The authors would also like to thank Joseph Lehec, Paul-Marie Samson, Dylan Langharst, Pietro Caputo, Cyril Roberto, James Melbourne, Krzysztof Oleszkiewicz, Christian Houdr\'{e} and Emanuel Milman for helpful discussions.

\appendix

\section{From the Painlev\'{e} description to log-concavity of \texorpdfstring{$TW_2$ }  aDistribution }\label{appendix}

Here we provide an alternate proof of the result that $TW_2$ is log-concave.   We use the following description of cumulative  distribution function (c.d.f.) of $TW_2$ distribution. Let $F_2(x)$ be the c.d.f. of $TW_2$ distribution and $Ai(x)$ be the Airy function for $x\in \mathbb{R}$ given by
\begin{align*}
    Ai(x)=\frac{1}{\pi}\int_{0}^{\infty}\cos \left( \frac{t^3}{3}+xt\right) dt.
\end{align*}
It is standard result that $Ai(x)\sim \frac{1}{\sqrt{2\pi}z^{1/4}}\exp(-\frac{2}{3}x^{3/2}),$ as $ x\rightarrow\infty$.
\begin{theorem}[Theorem $3.1.5$, \cite{AGZ}]\label{TW cdf}
    The function $F_2(x)$ admits the representation
    \begin{align}\label{cdf}
        F_2(x)=\exp\left( -\int_{x}^{\infty}(t-x)\ u^2(t) dt\right),
    \end{align}
    where $u$ satisfies 
    \begin{align}\label{Painleve}
        u''(x)=xu(x)+2u^3(x),
    \end{align}
    with $ u(x)\sim Ai(x), \ \mbox{as }x\rightarrow +\infty.$
\end{theorem}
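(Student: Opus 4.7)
The plan is to derive the representation \eqref{cdf} starting from the Fredholm determinant formula for $F_2$ and then reducing the determinant to a Painlevé II transcendent via the integrable-kernel machinery of Tracy and Widom. Concretely, I would take as the starting point the identity
\[
F_2(x)=\det\bigl(I-K_{\mathrm{Ai}}\bigr)_{L^2(x,\infty)},
\qquad
K_{\mathrm{Ai}}(y,z)=\frac{\mathrm{Ai}(y)\mathrm{Ai}'(z)-\mathrm{Ai}'(y)\mathrm{Ai}(z)}{y-z},
\]
which is the scaling limit of the GUE gap probability at the soft edge and is standard (e.g.\ AGZ, Chapter 3). The plan is then to differentiate this Fredholm determinant and exploit the fact that the Airy kernel is an \emph{integrable kernel} in the Its--Izergin--Korepin--Slavnov sense.

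First I would compute $\tfrac{d}{dx}\log F_2(x)=R(x,x)$ where $R=(I-K_{\mathrm{Ai}})^{-1}K_{\mathrm{Ai}}$ is the resolvent kernel on $(x,\infty)$. Writing $A=\mathrm{Ai}$, $B=\mathrm{Ai}'$ and introducing
\[
Q(x)=\bigl[(I-K_{\mathrm{Ai}})^{-1}A\bigr](x),\qquad P(x)=\bigl[(I-K_{\mathrm{Ai}})^{-1}B\bigr](x),
\]
one gets the kernel on the diagonal as $R(x,x)=P(x)Q(x)-\bigl(\text{correction from }AB'-A'B\bigr)$; tracking the boundary terms when the endpoint $x$ moves yields a coupled first-order system for $Q,P$. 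Using the Airy equation $A''(y)=yA(y)$ (hence $B'=yA$) together with integration-by-parts identities for the resolvent gives
\[
Q'(x)=P(x)-Q(x)\!\int_x^\infty Q(t)^2\,dt,
\qquad
P'(x)=xQ(x)-Q(x)\!\int_x^\infty Q(t)P(t)\,dt,
\]
after which the combination $u:=Q$ is shown to satisfy the Painlevé II equation $u''=xu+2u^3$ stated in \eqref{Painleve}. The asymptotic boundary condition $u(x)\sim\mathrm{Ai}(x)$ as $x\to+\infty$ follows because the resolvent $(I-K_{\mathrm{Ai}})^{-1}\to I$ on $(x,\infty)$ in the limit $x\to+\infty$ (operator norm tends to $0$ by the rapid decay of $\mathrm{Ai}$), so $Q(x)$ is asymptotic to $\mathrm{Ai}(x)$.

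Having identified $u$ with the Hastings--McLeod solution, the final step is to integrate. Differentiating once more, I would show
\[
\frac{d^{2}}{dx^{2}}\log F_2(x)=-u(x)^2,
\]
which follows by differentiating $R(x,x)$ using the system above and collapsing terms via the Painlevé equation. Since $F_2(x)\to 1$ and $u(x)$ decays like $\mathrm{Ai}(x)$ (hence super-exponentially) as $x\to+\infty$, integrating twice from $x$ to $\infty$ gives
\[
-\log F_2(x)=\int_x^\infty\!\!\int_s^\infty u(t)^2\,dt\,ds=\int_x^\infty (t-x)\,u(t)^2\,dt,
\]
which is exactly \eqref{cdf}. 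The main obstacle is step two: correctly extracting the Painlevé II ODE from the endpoint-differentiation of the Fredholm determinant. The bookkeeping of boundary contributions and the verification that the cubic term $2u^3$ appears with the correct coefficient are the delicate parts; the rest of the argument is either standard asymptotics for $\mathrm{Ai}$ or elementary integration.
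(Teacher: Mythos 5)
The paper itself does not prove this statement: it is imported verbatim as Theorem 3.1.5 of \cite{AGZ} and used as a black box in the appendix, so the only sensible comparison is with the classical Tracy--Widom argument reproduced in that reference. Your plan is exactly that argument --- start from $F_2(x)=\det(I-K_{\mathrm{Ai}})_{L^2(x,\infty)}$, exploit the integrable (IIKS) structure of the Airy kernel to reduce to Painlev\'e II, read off the Hastings--McLeod boundary condition from $(I-K_{\mathrm{Ai}})^{-1}\to I$ as $x\to+\infty$, and integrate $\frac{d^2}{dx^2}\log F_2=-u^2$ twice using the decay at $+\infty$ and Fubini. The overall strategy and the final integration step are correct as you state them.

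The genuine gap is in the middle step, exactly where you flag the difficulty: the first-order system you display is not the Tracy--Widom system, and as written it cannot produce \eqref{Painleve}. The correct relations are
\begin{align*}
Q'(x)&=P(x)-Q(x)\,u(x), \qquad P'(x)=xQ(x)+P(x)\,u(x)-2Q(x)\,v(x),
\end{align*}
with $u(x)=\int_x^\infty Q(t)^2\,dt$ and $v(x)=\int_x^\infty P(t)Q(t)\,dt$ (equivalently $u'=-Q^2$, $v'=-PQ$), together with the first integral $u^2-2v=Q^2$, which follows from these equations and the vanishing of $u,v,Q$ at $+\infty$. It is precisely the terms $Pu$ and $-2Qv$ that you dropped, combined with this first integral, that generate the coefficient $2$ in $u''=xu+2u^3$; with your equation $P'=xQ-Q\int_x^\infty QP$ the computation of $Q''$ leaves uncancelled $Pu$ and $Qu^2$ terms and does not collapse to Painlev\'e II. Relatedly, $\frac{d^2}{dx^2}\log F_2=-u^2$ is not a formal consequence of the two equations you wrote: what the endpoint differentiation actually yields is $\frac{d}{dx}\log F_2(x)=R(x,x)=\int_x^\infty Q(t)^2\,dt$, from which one more derivative gives $-Q(x)^2$; establishing that identity (or the corrected system above) is the real content of the step you left as ``bookkeeping,'' so the proposal identifies the right route but does not yet prove the theorem.
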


Equation \eqref{Painleve} is the Painlev\'e equation of type II. Many properties of the solutions of \eqref{Painleve} are deferred to later. Note that a twice differentiable function $f:\mathbb{R}\rightarrow \mathbb{R}$ is log-concave on $\mathbb{R}$, if $(\log f)''(x)\leq 0, \forall x\in \mathbb{R}$.

    First we prove a lemma which shows that if the function $u$ in \eqref{cdf} does not have any zeros, then density of $TW_2$ distribution is log-concave on $\mathbb{R}$. We then show that indeed the solution $u(x)$ has no zeros. For the rest of the article we denote $F_2(x)$ as $F(x)$.

    \begin{lemma}\label{lemma}
        If $u(x)$ is a solution of \eqref{Painleve} and $u(x)\sim Ai(x), $ as $x\rightarrow +\infty$, then $(\log F'(x))''\leq 0, \ \forall x\in \mathbb{R}$.
    \end{lemma}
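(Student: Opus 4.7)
The plan is to rewrite $(\log F'(x))''$ in a form that reduces to a one-sided inequality whose truth becomes transparent after a single use of the conservation law attached to \eqref{Painleve}. As in the discussion preceding the lemma, I assume $u$ has no zeros, and I set $H(x) := \int_x^\infty u^2(t)\,dt$ and $y(x) := u'(x)/u(x)$.

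First, by Leibniz's rule, the exponent $g(x) := -\int_x^\infty (t-x)\,u^2(t)\,dt$ in \eqref{cdf} satisfies $g'(x) = H(x)$ and $g''(x) = -u^2(x)$, so $F'(x) = F(x) H(x)$, and a direct computation gives
\begin{equation*}
(\log F'(x))'' \;=\; g''(x) - \left(\frac{u^2}{H}\right)'(x) \;=\; -\,\frac{u^2 H^2 + 2\,u u'\,H + u^4}{H^2}.
\end{equation*}
Writing $u' = u y$, the numerator factors as $u^2\, G(x)$, where $G(x) := H(x)^2 + 2 H(x) y(x) + u(x)^2$. Hence it suffices to prove that $G(x) \ge 0$ for every $x \in \mathbb{R}$.

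Next, I would extract the Painlev\'e-II conservation law by multiplying $u'' = xu + 2u^3$ by $u'$ and integrating on $[x,\infty)$, using the decay $u,u' \to 0$ forced by $u \sim \mathrm{Ai}$; this yields $(u'(x))^2 = x\,u^2(x) + u^4(x) + H(x)$, and hence $y^2 = x + u^2 + H/u^2$. Combining this with the Riccati identity $y' = u''/u - y^2 = x + 2u^2 - y^2$, together with $H' = -u^2$ and $u u' = u^2 y$, a direct calculation gives
\begin{equation*}
G'(x) \;=\; 2 H(x)\bigl(x + u^2(x) - y^2(x)\bigr) \;=\; -\,\frac{2\,H(x)^2}{u(x)^2} \;<\; 0,
\end{equation*}
where the second equality invokes the conservation law. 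Thus $G$ is strictly decreasing on all of $\mathbb{R}$.

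Finally, I would check that $G(x) \to 0$ as $x \to +\infty$: since $u \sim \mathrm{Ai}$, both $u^2$ and $H$ decay like $e^{-c x^{3/2}}$ while $y \sim -\sqrt{x}$ grows only polynomially, so $u^2$, $H^2$ and $2 H y$ all vanish in the limit. Strict monotonicity combined with this limit forces $G(x) > 0$ for every finite $x$, and hence $(\log F'(x))'' \le 0$, as claimed. The main obstacle I foresee is verifying the cancellation reducing $2 H(x + u^2 - y^2)$ to $-2 H^2/u^2$; this depends crucially on both the conservation law and on the non-vanishing of $u$, which is exactly why the hypothesis that $u$ has no zeros (to be established separately) is indispensable here.
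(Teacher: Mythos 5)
Your proposal is correct and follows essentially the same approach as the paper's own proof: it reduces $(\log F')'' \le 0$ to the nonnegativity of $G = H^2 + 2H(u'/u) + u^2$ (the paper's $g$, with $h=H$ and $v=-y$), derives the same conservation law $(u')^2 = xu^2 + u^4 + H$, shows $G' = -2H^2/u^2 < 0$, and then concludes by the same boundary limit $G(x)\to 0$ at $+\infty$. The only differences are cosmetic (working with the Riccati identity for $y=u'/u$ before invoking the conservation law rather than after), and you are right that the non-vanishing of $u$ must be established separately, which the paper does in Lemma \ref{neg asymp}.
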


\begin{proof}[Proof of Lemma \ref{lemma}] Define \begin{align*}
    h(x)=\int_{x}^{\infty}u^2(t) dt.
\end{align*} 

We make a note of the following functions. 
\begin{align*}
h'(x)&=-u^2(x)\\
    F(x)&=\exp\left( -\int_{x}^{\infty}(t-x)\ u^2(t) dt\right)\\
    F'(x)&=F(x) h(x)\\
    F''(x)&=F'(x)h(x)+F(x)h'(x)\\
    &=F(x)(h^2(x)-u^2(x))\\
    F'''(x)&=F'(x)(h^2-u^2)+F(x)(2hh'-2uu')\\
    &= F(x)(h^3-3u^2h-2uu')\\
    (\log F'(x))'&=\frac{F''(x)}{F'(x)}\\
    (\log F'(x))''&=\frac{F''' F'-(F'')^2}{(F')^2}\\
    &=\frac{-u^4-u^2h^2-2uu'h}{h^2}.
\end{align*}
    As we want to show $(\log F'(x))''\leq 0$, it is enough to show that 
    \begin{align}\label{non neg}
        u^4+u^2h^2+2uu'h\geq 0.
    \end{align}
 Dividing \eqref{non neg} by $u^2(x)$, it is enough to show
 \begin{align}\label{g}
     g(x)= u^2+h^2+2h\frac{u'}{u}\geq 0.
 \end{align}
Here we have used the assumption that $u$ has no zeros, which makes the function $g(x)$ well defined. We will show that $g(x)\rightarrow 0,$ as $x\rightarrow +\infty$ and that $g'(x)\leq 0$. This implies $g(x)\geq 0$, $\forall x\in \mathbb{R}$.
\begin{align}\label{g'}
    g'(x)=-2h \frac{u^4-u''u+(u')^2}{u^2}.
\end{align}

Multiplying \eqref{Painleve} by $u'(x)$ and integrating $x$ to $\infty$, we get that, using boundary conditions, 
\begin{align}\label{First derivative}
    (u'(x))^2=xu^2(x)+h(x)+u^4(x).
\end{align}
Using \eqref{First derivative} and \eqref{Painleve} in \eqref{g'}, we get that $g'(x)=-2\frac{h^2}{u^2}<0$. We now show that $g(x)\rightarrow 0$.

Although it is shown in the proof of Theorem $5.1$ of \cite{BLS17} that $g(x)\rightarrow 0$, we give a slightly different argument.
Define $v(x)=-{u'(x)}/{u(x)}.$ By \eqref{First derivative}, 
\begin{align}\label{v^2}
    v^2(x)=x+\frac{h(x)}{u^2(x)}+u^2(x).
\end{align}

Using standard asymptotics of $Ai(x), Ai'(x)$, we have that,
\begin{align*}
    \frac{Ai(x)}{Ai'(x)}\sim -1/\sqrt{x}, \quad x\rightarrow \infty.
\end{align*}
Applying l'H\^opital's rule to $\frac{h}{u^2}$ and using the fact that $u(x)\sim Ai(x)$, \eqref{v^2} gives $v(x)\sim \sqrt{x}$. As it is known that $h(x)$ decreases as $\exp(-x^{3/2})$ we get $h(x)v(x)\rightarrow 0$. This gives that $g(x)$ in \eqref{g} goes to $0$, as $x\rightarrow\infty$. This completes the proof of the lemma.
\end{proof}

Now we shall show that the solution to \eqref{Painleve} satisfying the boundary condition $u(x)\sim Ai(x),$ as $ x\rightarrow\infty$, has no zeros. In fact we show that $u(x)$ is monotonically decreasing and since $u(x)\sim Ai(x)$ we have $u(x)> 0$.

As we could not find a quotable reference stating that $u(x)$ is monotonically decreasing, we state the result in the form of a lemma. Note that existence and uniqueness of solution to \eqref{Painleve} has been proven in \cite{HM80}.

\begin{lemma}\label{neg asymp}
    If $u(x)$ is a solution to \eqref{Painleve} and $u(x)\sim Ai(x)$ as $x\rightarrow\infty$, then $u(x)$ is a non-increasing function with $u(x)\sim \sqrt{\frac{-x}{2}}$ as $x\rightarrow -\infty$.
\end{lemma}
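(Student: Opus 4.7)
The plan is to reduce most of the lemma to \cite{HM80} and then obtain monotonicity via a clean global-maximum argument for $u'$. The Hastings-McLeod theorem supplies the existence and uniqueness of the solution selected by $u(x) \sim Ai(x)$ at $+\infty$, the positivity $u > 0$ on $\mathbb{R}$, and the full asymptotic $u(x) \sim \sqrt{-x/2}$ as $x \to -\infty$, which already gives the second assertion of the lemma. What remains is to show $u'(x) \leq 0$ on all of $\mathbb{R}$.

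The first preparatory step is to establish $u'(x) \to 0$ as $x \to \pm\infty$. At $+\infty$ this follows from $u(x) \sim Ai(x)$ together with the corresponding asymptotic for $Ai'$. At $-\infty$ I would use the first integral $(u'(x))^2 = u^4(x) + x u^2(x) + h(x)$ derived in the proof of Lemma~\ref{lemma}, which is valid on all of $\mathbb{R}$. Inserting $u^2(x) = -x/2 + O(|x|^{-2})$ from \cite{HM80} gives $u^4(x) + x u^2(x) = -x^2/4 + O(|x|^{-1})$; separately, splitting $h(x) = \int_x^0 u^2 + \int_0^\infty u^2$ and using the same asymptotic on the first integral yields $h(x) = x^2/4 + C + o(1)$ for some constant $C$ that is forced to vanish by the non-negativity of $(u')^2$ combined with the fact that $u'$ cannot have a nonzero limit at $-\infty$ (otherwise $u$ would grow linearly, contradicting $u \sim \sqrt{-x/2}$). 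Hence $(u'(x))^2 \to 0$.

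The core argument is then a global-maximum argument. Assume, for contradiction, that $u'(x_0) > 0$ for some $x_0 \in \mathbb{R}$. Since $u'$ is continuous and vanishes at $\pm\infty$, its supremum on $\mathbb{R}$ is strictly positive and attained at a finite interior point $x_*$. The first- and second-order necessary conditions at this global maximum of $u'$ give $u''(x_*) = 0$ and $u'''(x_*) \leq 0$. Rewriting \eqref{Painleve} as $u'' = u(x + 2u^2)$, the equality $u''(x_*) = 0$ combined with $u(x_*) > 0$ forces $x_* + 2 u^2(x_*) = 0$. Differentiating once more, $u''' = u'(x + 2u^2) + u(1 + 4 u u')$, and the first summand vanishes at $x_*$, so $u'''(x_*) = u(x_*)\bigl(1 + 4 u(x_*) u'(x_*)\bigr)$. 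Combining $u(x_*) > 0$ with $u'''(x_*) \leq 0$ yields $u'(x_*) \leq -1/(4 u(x_*)) < 0$, contradicting $u'(x_*) > 0$. Thus $u' \leq 0$ on $\mathbb{R}$, and the same argument applied to a would-be zero $x_*$ of $u'$ gives $u'''(x_*) = u(x_*) > 0$, contradicting $u'''(x_*) \leq 0$, so the monotonicity is in fact strict.

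The step I expect to require the most care is the boundary behavior of $u'$ at $-\infty$: making $(u'(x))^2 \to 0$ fully rigorous via the first integral forces one to carry the subleading terms of $u^2$ and the integral $h(x)$ just far enough to witness the cancellation of the $\pm x^2/4$ leading contributions. The alternative route --- differentiating the Hastings-McLeod asymptotic of $u$ directly --- requires a uniform (or twice-differentiable) version of that expansion, so some bookkeeping with the asymptotic machinery of \cite{HM80} is unavoidable. Everything downstream of this preparatory step is a direct three-line application of the global-maximum / derivative-test mechanism described above.
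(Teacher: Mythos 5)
Your proposal is correct, and the core argument is genuinely different from the paper's. The paper leans on the full list of Hastings--McLeod facts --- in particular that $u''$ has \emph{exactly one} zero and that $u''<0$ for large negative $x$, $u''>0$ for large positive $x$, plus $u'<0$ on $[0,\infty)$ --- and argues that a point with $u'(x_0)>0$ would, via the intermediate value theorem, produce a local max of $u$ to its right and a local min to its left, which in turn forces two distinct zeros of $u''$, a contradiction. You instead use only $u>0$ and the $-\infty$ asymptotics, and replace the zero-counting argument by an intrinsic derivative test at the global maximum $x_*$ of $u'$: the Painlev\'{e} equation $u''=u(x+2u^2)$ forces $x_*+2u^2(x_*)=0$, and then $u'''(x_*)=u(x_*)\bigl(1+4u(x_*)u'(x_*)\bigr)\le 0$ forces $u'(x_*)<0$, an immediate contradiction. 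This is cleaner and more self-contained; it also sidesteps the paper's tacit strict-inequality claims ($u''(x_1)<0$ and $u''(x_2)>0$ at the critical points), which in the paper's wording really only follow with $\le$ and $\ge$ and would need a tiny extra argument to upgrade.

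The one place where your route costs more is the preparatory step $u'(x)\to 0$ as $x\to-\infty$, which you obtain from the first integral $(u')^2=u^4+xu^2+h$ by chasing the $\pm x^2/4$ cancellation. This works, but as you note it demands subleading asymptotics of $u$ near $-\infty$. There is a lighter alternative that stays within the facts the paper already imports from \cite{HM80}: since $u''<0$ on some $(-\infty,a]$, $u'$ is monotone there, hence $L:=\lim_{x\to-\infty}u'(x)$ exists in $(-\infty,\infty]$; $L>0$ (or $L=+\infty$) would drive $u(x)\to-\infty$ as $x\to-\infty$, contradicting $u>0$, while $L<0$ would make $u$ grow at least linearly as $x\to-\infty$, contradicting $u(x)\sim\sqrt{-x/2}$; so $L=0$. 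Substituting this for your asymptotic bookkeeping would make the whole proof shorter than the paper's and independent of the ``$u''$ has exactly one zero'' input. The closing remark about strict monotonicity is also correct: at a zero of $u'$ the same computation yields $u'''(x_*)=u(x_*)>0$, contradicting the second-order test.
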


\begin{proof}[Proof of Lemma \ref{neg asymp}]
    We use the following results about $u(x)$ from Theorem $1$ and Theorem $2$ of \cite{HM80}. 
    
    If $u(x)$ is a solution of \eqref{Painleve} and $u(x)\rightarrow 0$ as $x\rightarrow\infty$ and $u(x)\sim \sqrt{\frac{-x}{2}}$ as $x\rightarrow -\infty$,
    \begin{itemize}
        \item $u(x)$ is a unique solution satisfying $u(x)\sim Ai(x)$ as $x\rightarrow \infty$.
        \item $u(x)>0, u'(x)<0 $ for $x\geq 0$.
        \item $u''(x)$ has exactly one zero.
        \item $u''(x)<0$ for large negative $x$ and $u''(x)>0$ for large positive $x$.
    \end{itemize}
So by the assumptions of the lemma, we have $u(x)\sim \sqrt{\frac{-x}{2}}$ as $x\rightarrow -\infty$. We are left to show $u'(x)\leq 0, \forall x\in \mathbb{R}$.

Suppose $u'(x_0)>0$ for some $x_0$. As $u'(x)<0$ for $x>0$, there must be some $x_1>x_0,$ such that $ u'(x_1)=0$ and $ u''(x_1)<0$ ($x_1$ is a local maxima). As $u(x)\sim \sqrt{\frac{-x}{2}}$ as $x\rightarrow -\infty$, there must also be some $x_2<x_0$ such that $u'(x_2)=0$ and  $u''(x_2)>0$ ($x_2$ is a local minima).

As $u''(x)>0$ for large positive $x$ and $u''(x)<0$ for large negative $x$, there must exist $x_3>x_1$ such that $u''(x_3)=0$ and there must also exist $x_4<x_2$ such that $u''(x_4)=0$. This would mean $u''(x)$ has tow distinct zeros which contradicts the earlier result that $u''(x)$ has only one zero. Hence $u'(x)\leq 0$. This implies that $u(x)$ is non increasing. This completes the proof of Lemma \ref{neg asymp}.    
\end{proof}

Lemma \ref{lemma} and Lemma \ref{neg asymp} together imply that $TW_2$ is log-concave.  

\bigskip





\bibliographystyle{abbrv}

\bibliography{pustak1.bib}

\end{document}